\tikzstyle{vertex}=[circle,fill=black!75,minimum size=5pt,inner sep=0pt]
\tikzstyle{car}=[minimum size=8pt,inner sep=0pt]
\newcommand{\E}[1]{\ensuremath{\mathbb{E} \left[#1 \right]}}
\newcommand{\Prob}[1]{\ensuremath{\mathbb{P} \left(#1 \right)}}
\newcommand{\bP}{\ensuremath{\mathbb{P}}}
\newcommand{\var}[1]{\ensuremath{\mathrm{var} \left(#1 \right)}}
\newcommand{\NN}{\ensuremath{\mathbb{N}}}
\newcommand{\Po}{\ensuremath{\mathrm{Po}}}
\newcommand{\Geom}{\ensuremath{\mathrm{Geom}}}
\newcommand{\GGW}{\ensuremath{\mathrm{GGW}}}
\newcommand{\GGWinf}{\ensuremath{\mathrm{GGW}_{\infty}}}
\newcommand{\fl}[1]{\ensuremath{\lfloor #1 \rfloor}}
\newcommand{\equidist}{\ensuremath{\stackrel{d}{=}}}
\newenvironment{itemize*}%
  {\vspace{-0.3cm}%
  \begin{itemize}%
    \setlength{\itemsep}{0pt}%
    \setlength{\parskip}{0pt}}%
  {\end{itemize}}
\newenvironment{enumerate*}%
  {\vspace{-0.3cm}%
  \begin{enumerate}%
    \setlength{\itemsep}{0pt}%
    \setlength{\parskip}{0pt}}%
  {\end{enumerate}}
\newtheorem{thm}{Theorem}[section]
\newtheorem{lemma}[thm]{Lemma}
\newtheorem{prop}[thm]{Proposition}
\newtheorem{conj}[thm]{Conjecture}
\title{Parking on a random rooted plane tree}
\author{Qizhao Chen}
\address{The Queen's College, University of Oxford}
\email{qizhao.chen@queens.ox.ac.uk}
\author{Christina Goldschmidt}
\address{Department of Statistics and Lady Margaret Hall, University of Oxford}
\email{goldschm@stats.ox.ac.uk}
\date{\today}
\begin{document}

\begin{abstract}
In this paper, we investigate a parking process on a uniform random rooted plane tree with $n$ vertices. Every vertex of the tree has a parking space for a single car.  Cars arrive at independent uniformly random vertices of the tree.  If the parking space at a vertex is unoccupied when a car arrives there, it parks.  If not, the car drives towards the root and parks in the first empty space it encounters (if there is one).  We are interested in asymptotics of the probability of the event that all cars can park when $\fl{\alpha n}$ cars arrive, for $\alpha > 0$. We observe that there is a phase transition at $\alpha_c := \sqrt{2} -1$: if $\alpha < \alpha_c$ then the event has positive probability, whereas for $\alpha > \alpha_c$ it has probability 0.  Analogous results have been proved by Lackner and Panholzer~\cite{LacknerPanholzer}, Goldschmidt and Przykucki~\cite{Goldschmidt and Przykucki} and Jones~\cite{Jones} for different underlying random tree models.
\end{abstract}

\maketitle

\section{Introduction} \label{sec:intro}

In this paper, we continue the investigation of phase transitions for parking processes on random trees begun by Lackner and Panholzer~\cite{LacknerPanholzer}, Goldschmidt and Przykucki~\cite{Goldschmidt and Przykucki} and Jones~\cite{Jones}.

Consider a finite rooted tree, whose edges are directed towards the root.  Each vertex has a parking space for a single car.  A fixed number of cars arrive one by one, each at a uniformly random vertex in the tree, independently of the others. If the parking space at a vertex is unoccupied when a car arrives there, it parks.  If not, the car drives towards the root and parks in the first empty space it encounters (if there is one). If all vertices along the way already have been taken, the car has to leave the tree, and we say that the car cannot park. We will be interested in the probability that all cars can park.

This problem originates in the computer science literature, where Konheim and Weiss~\cite{KonheimWeiss-parkingPath} introduced it in order to model collisions in hash functions.  Their setting takes a directed path on $[n]=\{1,2,...,n\}$ with edges directed towards 1 and has each car picking an independent uniformly random preferred parking spot.  On non-degenerate trees, which is the setting of primary interest to us, a similar model has been studied in the context of the modelling of rainfall runoff from hillsides (see Jones~\cite{Jones} and the references therein): here, the vertices of the tree represent different positions on the hillside and the directed edges represent paths down which water can flow.  Each location has a certain capacity to absorb rainwater; once the threshold is exceeded, water flows futher down the hill; and the question is whether or not there will be runoff at the bottom.

Lackner and Panholzer~\cite{LacknerPanholzer} considered the parking problem on a uniform random rooted labelled (unordered) tree with $n$ vertices.  They proved that when a number $\fl{\alpha n}$ of cars arrive independently at uniformly random preferred parking spots, the probability that all cars can park undergoes a phase transition as we vary $\alpha$. A probabilistic explanation for this phase transition was given in Goldschmidt and Przykucki~\cite{Goldschmidt and Przykucki}, using the objective method. In this paper, we investigate the scenario of uniform random rooted plane (i.e.\ ordered) trees instead of unordered trees, with $\fl{\alpha n}$ cars again arriving at independent uniform vertices.  We show that there is an analogous phase transition in this setting.

To specify our tree model precisely, let $\NN = \{1,2,\ldots\}$ and let $\mathbb{U} = \cup_{n \ge 0} \NN^n$ be the \emph{Ulam--Harris tree}. (By convention $\NN^0 = \{\emptyset\}$.) Any element $\mathbf{u} \in \mathbb{U}$ is a word $\mathbf{u} = u_1u_2\ldots u_n$ for some $n \ge 0$ (where if $n=0$ the word is empty).  The \emph{parent} $p(\mathbf{u})$ of $\mathbf{u}$ is $u_1 u_2 \ldots u_{n-1}$ and the \emph{children} of $\mathbf{u}$ are $\mathbf{u}1, \mathbf{u}2, \ldots$. A \emph{rooted plane tree} $\mathrm{t}$ is a subset of $\mathbb{U}$ with the following properties:
\begin{itemize}
\item[(a)] $\emptyset \in \mathrm{t}$;
\item[(b)] if $\mathbf{u} \in \mathrm{t}$ then $p(\mathbf{u}) \in \mathrm{t}$;
\item[(c)] for every $\mathbf{u} \in \mathrm{t}$, there exists $0 \le k(\mathbf{u}) < \infty$ such that if $k(\mathbf{u}) = 0$ then $\mathbf{u}i \notin \mathrm{t}$ for all $i \in \NN$, and if $k(\mathbf{u}) > 0$ then $\mathbf{u}i \in \mathrm{t}$ if and only if $1 \le i \le k(\mathbf{u})$.
\end{itemize}
(The \emph{root} is $\emptyset$.) We take as a convention that the edges of $\mathrm{t}$ are directed from $\mathbf{u}$ to $p(\mathbf{u})$. Write $\mathcal{T}$ for the set of rooted plane trees.  The \emph{size} of $\mathrm{t} \in \mathcal{T}$ (i.e.\ its size as a subset of $\mathbb{U}$) is denoted by $|\mathrm{t}|$.  Write $\mathcal{T}_n$ for the subset of $\mathcal{T}$ consisting only of trees of size $n$. We note that $\mathcal{T}_n$ is a finite set, whose cardinality is given by the Catalan number $C_{n-1}= \frac{1}{n} \binom{2n-2}{n-1}$.  Throughout this paper, we write $T_n$ for a uniformly random element of $\mathcal{T}_n$.

Our main result is the following.  
\begin{thm} \label{thm:keyproblem}
Suppose we have $\lfloor \alpha n \rfloor$ cars wishing to park on $T_n$, and each car arrives at an independent uniform random vertex of the tree. Let $A_{n,\alpha}$ be the event that all the cars can park. Then 
\[
 \lim_{n \to \infty} \bP(A_{n,\alpha}) = 
 \begin{cases}
\frac{\sqrt{1-2\alpha - \alpha^2}}{(1-\alpha)^2\exp(\alpha)} & \text{ if $0 \le \alpha \le \sqrt{2}-1$,} \\
0 & \text{ if $\alpha > \sqrt{2}-1$.}
\end{cases}
\]
\end{thm}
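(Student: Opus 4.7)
I will use the objective method, exploiting the fact that $T_n$ is distributed as a critical Galton--Watson tree with $\Geom(1/2)$ offspring (pgf $g(s) = 1/(2-s)$) conditioned to have $n$ vertices. Standard results on conditioned Galton--Watson trees give the local weak limit at the root as Kesten's modified tree: an infinite spine $\emptyset = v_0, v_1, \ldots$ on which each vertex has size-biased offspring $\hat{K}$ (pgf $\hat{g}(s) = s/(2-s)^2$), one uniformly chosen child being the next spine vertex, and non-spine children being roots of independent unconditioned $\Geom(1/2)$ GW trees. I would also Poissonise the car arrivals, giving each vertex an independent $\Po(\alpha)$ number of cars, which is asymptotically equivalent to the uniform $\lfloor \alpha n \rfloor$-car model for the parking event.

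The overflow $X$ out of a fringe subtree satisfies the recursive distributional equation
\[
X \equidist \Bigl(N + \sum_{i=1}^{K} X_i - 1\Bigr)^+,
\]
with $N \sim \Po(\alpha)$, $K \sim \Geom(1/2)$, and iid $X_i \equidist X$. Writing $\phi(z) = \E{z^X}$ converts this into a quadratic equation for $\phi$; its discriminant has a double zero at $z = 1$ with leading coefficient $1 - 2\alpha - \alpha^2$, which identifies $\sqrt{2}-1$ as the critical value and yields $\phi(0) = 2 - e^{-\alpha}/(1-\alpha)$ together with $\E{X} = \tfrac{1}{2}\bigl((1-\alpha) - \sqrt{1 - 2\alpha - \alpha^2}\bigr)$. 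The overflow $X^*$ through a spine edge then satisfies the Lindley recursion $X^* \equidist (X^* + W)^+$ with $W = N + \sum_{i=1}^{\hat{K}-1} X_i - 1$, and a direct calculation gives $\E{W} = -\sqrt{1-2\alpha-\alpha^2}$, so this recursion has a proper stationary law precisely in the subcritical regime. Since $A_{n,\alpha}$ is the event of zero flux through the root, one expects $\bP(A_{n,\alpha}) \to \bP(X^* = 0)$; and because $W$ is integer valued with $W \ge -1$, the classical cycle-lemma/Wiener--Hopf identity for downward-skip-free Lindley chains,
\[
\bP\Bigl(\sup_{n \ge 0} \sum_{k=1}^n W_k \le 0\Bigr) = \frac{-\E{W}}{\bP(W=-1)},
\]
applies. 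A short computation of $\bP(W=-1) = e^{-\alpha}/(2-\phi(0))^2 = (1-\alpha)^2 e^\alpha$ then gives the claimed formula. For $\alpha > \sqrt{2}-1$ the discriminant is negative, the fixed-point equation admits no proper solution, and $\bP(A_{n,\alpha}) \to 0$ follows by showing that the flux along the spine grows without bound.

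The main obstacle is that $A_{n,\alpha}$ is a global event on $T_n$, so local weak convergence at the root does not directly deliver $\bP(A_{n,\alpha}) \to \bP(X^*=0)$: a genuine transfer argument is required. I would couple $T_n$ to its Kesten limit along a spine depth $k_n \to \infty$, chosen short enough for the coupling to succeed with high probability and long enough for the Lindley chain to equilibrate, bound the contribution of the truncated top of the tree above the coupled spine using tightness of $X^*$, and invoke exponential mixing of the reflected walk to conclude convergence in distribution of the root overflow. Secondary technicalities include controlling the Poissonisation error uniformly on compact subintervals of $[0, \sqrt{2}-1)$ and handling the critical point $\alpha = \sqrt{2}-1$ separately, where both sides of the identity vanish and a finer analysis of the null-recurrent Lindley chain is needed.
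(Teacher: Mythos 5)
Your analysis of the limit object is essentially the paper's own argument in different clothing, and your computations check out. The paper takes $X$ to be the number of cars \emph{visiting} the root of a $\GGW(\frac{1}{2})$ tree (so that its RDE is $X \equidist P + \sum_{i=1}^N (X_i-1)^+$), where you work with the overflow $(X-1)^+$ directly; your values $\phi(0) = 2 - e^{-\alpha}/(1-\alpha)$ and $\E{X_{\mathrm{overflow}}} = \frac{1}{2}\bigl(1-\alpha-\sqrt{1-2\alpha-\alpha^2}\bigr)$ agree with the paper's $p=1-\alpha$ and $\E{X} = \frac{1}{2}\bigl(1+\alpha-\sqrt{1-2\alpha-\alpha^2}\bigr)$ after translation. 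Likewise your Lindley/Wiener--Hopf identity for the downward-skip-free spine walk is exactly the paper's Lemma on skip-free random walks applied to $C_n = n - \sum_{k=1}^n Y_k$, and $\bP(W=-1) = (1-\alpha)^2 e^{\alpha}$, $-\E{W} = \sqrt{1-2\alpha-\alpha^2}$ reproduce the stated limit. (Minor quibble: $A_{n,\alpha}$ is the event that at most one car visits the root, not that the flux is zero; and at $\alpha=\sqrt{2}-1$ the skip-free lemma already gives probability $0$ directly, since $m=0$ is allowed, so no separate null-recurrent analysis is needed for the limit object.)

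The genuine gap is in the transfer from the infinite model back to $T_n$. The easy direction, $\limsup_n \bP(A_{n,\alpha}) \le \bP(A_\alpha)$, does follow from local weak convergence plus monotonicity of the parking process in the car configuration. But your proposed argument for the converse --- couple $T_n$ with the Kesten tree to spine depth $k_n\to\infty$, then ``bound the contribution of the truncated top using tightness of $X^*$ and exponential mixing'' --- is circular as sketched: the overflow $R_n$ injected into the coupled region from the $\Theta(n)$ uncoupled vertices above spine level $k_n$ is itself the root flux of a size-conditioned tree, i.e.\ precisely the quantity whose tightness you are trying to establish; the stationary law $X^*$ of the Lindley chain on the \emph{unconditioned} Kesten tree does not a priori control it, because the conditioning on total size $n$ correlates the top of the tree with the coupled region. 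The paper closes this gap by an entirely different device, following Goldschmidt and Przykucki: a monotone coupling (``building scheme'') of $(T_n)_{n\ge 1}$ in which $T_{n+1}$ is obtained from $T_n$ by adding a single leaf. Its existence for uniform plane trees is the one new ingredient the paper must supply (Proposition~\ref{prop:building}), obtained by transporting the Luczak--Winkler building scheme for uniform binary trees through the rotation correspondence $\Phi:\mathcal{T}_n \to \mathscr{T}^2_{n-1}$. If you want to complete your route instead, you would need an independent a priori tightness bound on the root overflow of $T_n$ in the subcritical regime, which is not provided by anything in your sketch.
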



If $\alpha > 1$ then we have more cars than parking spaces and so trivially $\Prob{A_{n,\alpha} } = 0$ for all $n$ sufficiently large.  So for the rest of the paper we will impose that $\alpha \le 1$.
Henceforth, we write \Geom($\frac{1}{2}$) for the geometric distribution with success probability $1/2$ (having probability mass function $(1/2)^{i+1}$ for $i \ge 0$ and, therefore, mean $1$), and $\mathrm{Po}(\lambda)$ for the Poisson distribution with parameter $\lambda > 0$.  We write $\GGW(\frac{1}{2})$ for the law of the family tree of a Galton--Watson branching process with \Geom($\frac{1}{2}$) offspring distribution.  Fix a tree $\mathrm{t} \in \mathcal{T}_n$.  Then the \GGW($\frac{1}{2}$) model generates $\mathrm{t}$ with probability $\prod_{\mathbf{u} \in \mathrm{t}} 2^{-k(\mathbf{u}) - 1}$.  Since $\sum_{\mathbf{u} \in \mathrm{t}} k(\mathbf{u}) = n-1$, this product is equal to $2^{-2n+1}$.  So, in other words, the $\GGW(\frac{1}{2})$ model generates each of the $C_{n-1}$ elements of $\mathcal{T}_n$ with the same probability, and so, in particular, $T_n$ has the same distribution as a \GGW($\frac{1}{2}$) tree conditioned to have $n$ vertices. We will make extensive use of this fact.  Let us also observe that the number of cars arriving at a single vertex of $T_n$ has $\mathrm{Binomial}(\fl{\alpha n}, \frac{1}{n})$ distribution, which converges in distribution to $\mathrm{Po}(\alpha)$ as $n \to \infty$.

We will prove Theorem~\ref{thm:keyproblem} in several stages.  The structure of the proof follows closely that of Theorem 1.1 in \cite{Goldschmidt and Przykucki}.  The basic idea is that the model of random tree plus random arrivals of cars possesses a local weak limit, on which one can more easily analyse the problem, and for which the probability that all cars can park is exactly the limit given in Theorem~\ref{thm:keyproblem}.  Conveniently, it turns out that the probability that all cars can park behaves continuously with respect to this notion of convergence, and so we are able to deduce Theorem~\ref{thm:keyproblem}.  

The limit model consists of a tree generated as follows: we take an infinite backbone (a copy of $\mathbb{N} = \{1,2,\ldots\}$, rooted at $1$) and at each point $k \ge 1$ we graft a pair of i.i.d.\ $\GGW(\frac{1}{2})$ trees (by identifying their roots with $k$).  We orient all edges towards the root. (We do not give this tree a proper Ulam--Harris labelling because that will be unnecessary in the sequel.) Now let an independent $\Po(\alpha)$ number of cars arrive at each vertex.  It is convenient to analyse the parking process on this infinite tree in different stages.  In Section~\ref{sec:ggw}, we examine the parking process on a single $\GGW(\frac{1}{2})$ tree with i.i.d.\ $\Po(\alpha)$ arrivals of cars.  In this setting, the random number of cars that reach the root satisfies a recursive distributional equation, which enables us to essentially completely analyse its generating function. In Section~\ref{sec:limitingModel}, we first argue that the claimed limiting model really is the local weak limit of the finite-$n$ model.  Then we use the conclusions of Section~\ref{sec:ggw} to derive results for the parking problem on the full limiting model, and  complete the proof of Theorem~\ref{thm:keyproblem} by confirming that the probability that all cars can park on our limiting model is indeed the limit of such probabilities for uniform random rooted plane trees as the number of vertices tends to infinity. 

Finally, in Section~\ref{sec:context}, we put our results into context and discuss future directions.

\section{Parking on a single geometric Galton--Watson tree}\label{sec:ggw}

We begin by presenting the key theorem of this section.
\begin{thm}
\label{thm:GeoGWcritical}
 Let $X$ denote the number of cars that visit the root of a $\GGW(\frac{1}{2})$ tree with, for some $\alpha \in (0,1)$, an independent $\Po(\alpha)$ number of cars initially arriving at each vertex.  Let $p := \Prob{X=0}$.
 \begin{enumerate}
  \item \label{thm:GeoGWcritical:smallAlpha}
  If $ 0 < \alpha \le \sqrt{2}-1$ then the probability generating function of $X$ is
   \[
    G(s) = \frac{1}{2} \left((1+\alpha)s+1-\alpha + \sqrt{((1+\alpha)s+1-\alpha)^2-4s\exp(\alpha(s-1))} \right) .
   \]
   Moreover, $p = 1 - \alpha$ and $\E{X} =  \frac{1}{2}(1+\alpha - \sqrt{1 - 2\alpha -\alpha^{2}})$.  
  \item
  \label{thm:GeoGWcritical:largeAlpha}
  If $\alpha > \sqrt{2}-1$ then we have $1-\alpha < p \le \frac{2}{(3+2\sqrt{2})\alpha+1}$.
 Moreover, the probability generating function of $X$ is
$$
G(s) = \begin{cases}
    \frac{1}{2} \left((2-p)s+p - \sqrt{((2-p)s+p)^2-4s\exp(\alpha(s-1))} \right) &\mbox{if $s \geq s_p$,}\\
    \frac{1}{2} \left((2-p)s+p + \sqrt{((2-p)s+p)^2-4s\exp(\alpha(s-1))} \right)  &\mbox{if $s < s_p$,}
    \end{cases}
$$
where $s_p = \frac{(2-p-\alpha p)-\sqrt{(2-p-\alpha p)^2-4\alpha p(2-p)}}{2\alpha(2-p)}$.
Furthermore, for $\alpha > \sqrt{2} - 1$ we have $\E{X} = \infty$.
 \end{enumerate}
\end{thm}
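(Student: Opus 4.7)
First, I would condition on the structure at the root of the $\GGW(\frac{1}{2})$ tree: it has $K \sim \Geom(\frac{1}{2})$ children, whose hanging subtrees are i.i.d.\ copies of itself, and it receives $A \sim \Po(\alpha)$ cars of its own. Writing $X_i$ for the number of cars visiting the $i$th child-root, exactly one parks there and the remaining $(X_i - 1)^+$ flow up to the root, so
\[
 X \equidist A + \sum_{i=1}^{K} (X_i - 1)^+.
\]
Passing to PGFs with $p = \Prob{X=0}$, and using $\E{z^K} = 1/(2-z)$, $\E{s^A} = e^{\alpha(s-1)}$, and $\E{s^{(X_i-1)^+}} = p + (G(s)-p)/s$, the recursion rearranges into the quadratic identity
\[
 G(s)^2 - ((2-p)s + p) G(s) + s e^{\alpha(s-1)} = 0,
\]
whose two solution branches are $G_\pm(s) = \frac{1}{2}((2-p)s + p \pm \sqrt{D(s)})$ with $D(s) = ((2-p)s+p)^2 - 4s e^{\alpha(s-1)}$.

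\textbf{Subcritical case.} I would then pin down $p$ and choose the branch. Since $G$ must be real on $[0,1]$, we need $D \geq 0$ there; direct computation gives $D(1) = 0$ and $D'(1) = 4(1-p-\alpha)$, so $p \geq 1-\alpha$ (otherwise $D < 0$ just below $1$). Taking the smallest feasible value $p = 1-\alpha$ makes $s=1$ a double zero of $D$, and computing $D''(1) = 2(1-2\alpha-\alpha^2)$ shows this is $\geq 0$ precisely when $\alpha \leq \sqrt{2}-1$---the critical threshold. In that range, a further analysis of $D$ (for example by comparing $(2-p)s + p$ with $2\sqrt{s}\,e^{\alpha(s-1)/2}$) shows $D \geq 0$ throughout $[0,1]$ with equality only at $s=1$, so $G_+$ is an analytic PGF on $[0,1]$ (non-negativity of its series coefficients follows by induction). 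The stated $\E{X}$ then drops out of $G_+'(1^-)$ via the local expansion $D(s) = (1-2\alpha-\alpha^2)(s-1)^2 + O((s-1)^3)$.

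\textbf{Supercritical case and main obstacle.} For $\alpha > \sqrt{2}-1$, the choice $p=1-\alpha$ fails ($D''(1)<0$ makes $D<0$ near $s=1$), so $p>1-\alpha$ strictly; then $D'(1)<0$ gives $\sqrt{D(s)}$ a vertical tangent at $s=1^-$, yielding $\E{X} = G'(1^-) = \infty$. Now $G(0) = p$ forces $G_+$ near $0$ while $G'(1^-) = +\infty$ (not $-\infty$) forces $G_-$ near $1$, so the only way to stitch the branches while keeping $D \geq 0$ is at a double zero $s_p \in (0,1)$ of $D$. Imposing $D(s_p) = D'(s_p) = 0$ and eliminating $e^{\alpha(s_p-1)}$ yields the algebraic equation $\alpha(2-p)s_p^2 - (2 - p(1+\alpha))\,s_p + p = 0$, whose smaller root is the $s_p$ of the theorem. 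Real solvability of this quadratic in $s_p$ requires $(2-p(1+\alpha))^2 \geq 4\alpha p(2-p)$, which, using the factorisation $1+6\alpha+\alpha^2 = (1+(3-2\sqrt{2})\alpha)(1+(3+2\sqrt{2})\alpha)$, simplifies to the upper bound $p \leq \frac{2}{1+(3+2\sqrt{2})\alpha}$. The principal obstacle I anticipate is that the functional equation alone admits a one-parameter family of formal solutions, so singling out the correct value $p = \Prob{X=0}$ requires an external input---most naturally a truncation argument at height $n$, in which $p_n = \Prob{X_n = 0}$ is unambiguously defined and monotonically converges to $p$.
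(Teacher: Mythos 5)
Your route is essentially the paper's: the same RDE and quadratic for $G$, branch selection forced by $G(0)=p>0$ near $0$ and by the sign of $G'(1^-)$ near $1$, the threshold $\sqrt{2}-1$ read off from $D''(1)$ evaluated at $p=1-\alpha$, and the supercritical branch switch located at a double zero of the discriminant, yielding the same quadratic for $s_p$ and the same computation of the upper bound $p\le 2/(1+(3+2\sqrt{2})\alpha)$. One genuine improvement: your derivation of $p\ge 1-\alpha$ from $D(1)=0$, $D'(1)=4(1-p-\alpha)$ and the constraint $D\ge 0$ on $[0,1]$ is cleaner than the paper's argument, which goes through Abel's theorem and a contradiction between $G'(1^-)=\infty$ and $G'(1^-)\le 1$.

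The genuine gap is the one you flag yourself: in the subcritical case you \emph{assert} $p=1-\alpha$ (``taking the smallest feasible value'') and defer its justification to an unexecuted truncation argument. The paper closes this with far less machinery, exploiting that $G$ is the generating function of an actual non-negative, a.s.\ finite random variable. First, taking expectations in the RDE gives $\E{X}=\alpha+\E{X}-\Prob{X\ge 1}$, so finiteness of $\E{X}$ already forces $p=1-\alpha$. Second, and decisively: for $\alpha\le\sqrt{2}-1$ one shows $G=Q_+$ on all of $[0,1]$ using only $p\ge 1-\alpha$ and the fact that the discriminant is increasing in $p$; differentiating $Q_+$ then shows that if $p>1-\alpha$ the fractional term has numerator tending to $2(1-p-\alpha)<0$ while $\sqrt{D(s)}\to 0^+$, so $G'(s)\to-\infty$, which is impossible for a PGF. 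Hence $p=1-\alpha$ with no truncation needed. Two smaller omissions: you assert without proof that the switch point is the \emph{smaller} root of your quadratic (the paper proves this by showing $h'(r)>0$ at the other root $r$, which is incompatible with $r$ being the leftmost zero of the discriminant), and you do not rule out multiple branch switches (the paper does so by noting any switch point must be a root of that same quadratic, of which there are at most two, while the branches differ at $0$ and at $1^-$, forcing exactly one switch).
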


We observe that the quantity $\E{X}$ undergoes a discontinuous phase transition at $\alpha = \sqrt{2}-1$:
\begin{equation} \label{eqn:discphtr}
\E{X} =  \begin{cases}
\frac{1}{2} (1+\alpha - \sqrt{1 - 2\alpha - \alpha^{2}}) & \text{ for $\alpha \le  \sqrt{2}-1$} \\
\infty & \text{ for $\alpha > \sqrt{2}-1$} .
\end{cases}
\end{equation}
(Note that $\E{X} = 1/\sqrt{2}$ for $\alpha = \sqrt{2} - 1$.)

The rest of Section~\ref{sec:ggw} is devoted to the proof of Theorem~\ref{thm:GeoGWcritical}. In Section~\ref{subsec:general}, we prove some basic properties of $G$, while in Sections~\ref{subsec:small} and \ref{subsec:large} we deal with cases that $\alpha$ is below and above the threshold $\sqrt{2}-1$, respectively.

\subsection{Results for general $\alpha$}\label{subsec:general}
To begin with, we establish some elementary results that hold for all $\alpha\in(0,1)$.

For an integer $m$ we write $m^+ = \max\{m,0\}$.  By the recursive definition of a $\GGW(\frac{1}{2})$ tree, we obtain the following recursive distributional equation (RDE) for $X$:
\begin{equation} \label{eqn:rde}
X \equidist P + \sum_{i=1}^N (X_i -1)^+,
\end{equation}
where $P \sim \Po(\alpha)$, $N \sim \Geom(\frac{1}{2})$, $X_1, X_2, \ldots$ are i.i.d.\ copies of the (non-negative integer-valued) random variable $X$, and all of the random variables on the right-hand side are independent.

\begin{prop}
\label{prop:p0}
Let $\alpha \in (0,1)$. We have
\[
 p = \Prob{X=0} \geq \frac{1}{2}\exp(-\alpha) > 0.
\]
Moreover, either $\E{X}=\infty$ or $p = 1-\alpha$ (or both).
\end{prop}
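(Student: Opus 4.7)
The plan is to handle the two parts of the proposition independently, each by a short direct computation from the RDE~\eqref{eqn:rde}.

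For the lower bound on $p$, I would simply note that on the event $\{P=0\}\cap\{N=0\}$ the right-hand side of~\eqref{eqn:rde} equals $0$: the Poisson contribution vanishes and the sum over children is empty. Hence $X=0$ on this event. Independence of $P$ and $N$, together with $\Prob{P=0}=e^{-\alpha}$ and $\Prob{N=0}=1/2$, then gives $p\ge \tfrac{1}{2}e^{-\alpha}>0$, as required.

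For the dichotomy, I would take expectations in~\eqref{eqn:rde}. All summands are non-negative integers, so Wald's identity applies with no integrability hypothesis, and the independence of $N$ from the i.i.d.\ family $(X_i)$ yields
\[
\E{X}=\E{P}+\E{N}\,\E{(X-1)^+}=\alpha+\E{(X-1)^+}.
\]
I would then invoke the elementary identity $(k-1)^+=(k-1)+\I{k=0}$, valid for every integer $k\ge 0$. Under the additional assumption $\E{X}<\infty$ this gives $\E{(X-1)^+}=\E{X}-1+p$, and substituting into the previous display yields $\E{X}=\alpha+\E{X}-1+p$, which forces $p=1-\alpha$. The contrapositive is exactly the required ``either $\E{X}=\infty$ or $p=1-\alpha$'' statement.

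No substantive obstacle is expected; this is essentially a one-line moment computation. The only point requiring a moment's care is that when $\E{X}=\infty$ the identity above degenerates to $\infty=\infty$ and conveys no information about $p$, which is precisely why the proposition must be phrased as a dichotomy rather than an unconditional equality.
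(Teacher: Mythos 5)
Your proposal is correct and follows essentially the same route as the paper: the lower bound comes from the event $\{N=0,P=0\}$, and the dichotomy from taking expectations in the RDE (the paper writes the resulting identity as $\E{X}=\alpha+\E{X}-\Prob{X\ge 1}$, which is the same as your $\E{X}=\alpha+\E{X}-1+p$). Your extra care with Wald's identity and the degenerate case $\E{X}=\infty$ is a slightly more explicit rendering of the same one-line computation.
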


\begin{proof}
 The inequality $p \geq \frac{1}{2}\exp(-\alpha)$ holds because if the $\GGW(\frac{1}{2})$ tree contains only the root and no cars choose to park there then clearly $X=0$. Hence, we must have 
 \[
  p \geq \Prob{N=0, P=0} = \frac{1}{2}\exp(-\alpha) > 0.
 \]
 For the second statement, we take expectations in \eqref{eqn:rde} and obtain
 \[
  \E{X} = \alpha + \E{X} - \Prob{X \ge 1},
 \]
 which implies the claim.
\end{proof}

Now let $G(s) = \E{s^X}$ for $s \ge 0$ be the probability generating function of $X$. We obtain from the RDE~\eqref{eqn:rde} that
\begin{align*}
G(s) = \E{s^P} \E{\E{s^{(X - 1)^+}}^N }  & = \exp(\alpha(s-1)) \cdot \frac{1}{2-\E{s^{(X-1)^+}} } \\
& = \frac{ \exp(\alpha(s-1))}{2-\E{s^{X-1}}+(s^{-1}-1)p}  \\
& = \frac{\exp(\alpha(s-1))}{2-s^{-1}G(s)+(s^{-1}-1)p}\ .
\end{align*}
Hence, $G(s)$ satisfies a quadratic equation:
\begin{equation} \label{eqn:quadratic}
G(s)^{2} - \left(\left(2-p\right)s+p\right)G(s) + s\exp(\alpha(s-1)) = 0 .
\end{equation}
Solving this equation, we straightforwardly obtain that there are two possible values for $G(s)$ for each $s \in (0,1)$:\ \begin{equation}
 \label{eq:Gplusminus}
  G(s) = \frac{1}{2} \left((2-p)s+p \pm \sqrt{((2-p)s+p)^2-4s\exp(\alpha(s-1))} \right).
 \end{equation}

For simplicity we write 
\begin{align*}
Q_{+}(s) & :=\frac{1}{2} \left((2-p)s+p + \sqrt{((2-p)s+p)^2-4s\exp(\alpha(s-1))} \right)\\
Q_{-}(s)& :=\frac{1}{2} \left((2-p)s+p - \sqrt{((2-p)s+p)^2-4s\exp(\alpha(s-1))} \right)
\end{align*}
for the two branches. Since we have an explicit construction of a solution to the RDE (\ref{eqn:rde}), $G$ must exist and so, in particular, for all $s \in (0,1]$,  we have
\begin{equation} \label{eq:discriminant}
((2-p)s+p)^2-4s\exp(\alpha(s-1)) \ge 0.
\end{equation}

\begin{lemma}
\label{lem:pNotTooSmall}
 For any $\alpha \in (0,1)$, we have $p \geq 1-\alpha$.
\end{lemma}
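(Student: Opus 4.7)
The plan is to use the quadratic equation \eqref{eqn:quadratic} together with the constraint \eqref{eq:discriminant} that its discriminant be nonnegative on $(0,1]$, combined with the boundary condition $G(1)=1$.

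Set $D(s) := ((2-p)s+p)^2 - 4s\exp(\alpha(s-1))$, so \eqref{eq:discriminant} says $D(s) \ge 0$ for all $s \in (0,1]$. First I would observe that $D(1) = (2-p+p)^2 - 4 = 0$, which is forced by $G(1)=1$ (alternatively, by plugging $s=1$ into \eqref{eqn:quadratic}). Since $D$ is nonnegative on $(0,1]$ and vanishes at the right endpoint $s=1$, its left derivative there must be nonpositive: for $s<1$,
\[
\frac{D(1)-D(s)}{1-s} = -\frac{D(s)}{1-s} \le 0,
\]
so letting $s \uparrow 1$ yields $D'(1) \le 0$.

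The second step is a direct computation of $D'(1)$. Differentiating,
\[
D'(s) = 2(2-p)\bigl((2-p)s+p\bigr) - 4\exp(\alpha(s-1))\bigl(1+\alpha s\bigr),
\]
which at $s=1$ evaluates to $2(2-p)\cdot 2 - 4(1+\alpha) = 4(1-p-\alpha)$. Combining with $D'(1) \le 0$ gives $1-p-\alpha \le 0$, i.e.\ $p \ge 1-\alpha$, as desired.

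I do not expect any serious obstacle: the whole argument is a one-line calculus observation once one notices that $s=1$ is a double root of the quadratic in $G(s)$ (which it must be, since $G(1)=1$ makes the discriminant vanish), and that the nonnegativity of $D$ on a left neighbourhood of $1$ pins down the sign of $D'(1)$. It is worth remarking, for consistency with Proposition~\ref{prop:p0}, that equality $p=1-\alpha$ corresponds exactly to the case $D'(1)=0$, i.e.\ $s=1$ being a \emph{triple} contact for $D$ at $s=1$, which will turn out to govern the regime $\alpha \le \sqrt{2}-1$ analysed in Section~\ref{subsec:small}.
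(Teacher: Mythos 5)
Your proof is correct, and it takes a genuinely different route from the paper's. The paper argues by contradiction through the generating function itself: it derives the expression \eqref{eq:expectationIntermediate} for $G'(s)$, notes that if $p<1-\alpha$ the numerator tends to the positive limit $1-p-\alpha$ while the denominator tends to $0$, invokes Abel's theorem to conclude $G'(1-)=\infty$, and then derives a contradiction from the fact that $G(s)-s$ would have to decrease to $0$ while $G(s)\le 1$ forces $G'(1-)\le 1$. Your argument bypasses $G'$ entirely and works directly with the discriminant $D(s)$ of the quadratic \eqref{eqn:quadratic}: since \eqref{eq:discriminant} gives $D\ge 0$ on $(0,1]$ and $G(1)=1$ forces $D(1)=0$, the one-sided difference quotient pins down $D'(1)\le 0$, and the computation $D'(1)=4(1-p-\alpha)$ finishes the job. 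Both computations check out. Your version is shorter and more elementary (no Abel's theorem, no limiting argument about $G(s)-s$), and it is stylistically consistent with the discriminant analysis the paper carries out later (the function $g$ in Lemma~\ref{lem:Q_+forSmallAlpha} is exactly your $D$ at $p=1-\alpha$, and its derivatives at $s=1$ drive Lemma~\ref{lem:pLargeAlpha}). What the paper's route buys in exchange is the formula \eqref{eq:expectationIntermediate}, which is reused in Lemma~\ref{lem:psmallAlpha} to compute $\E{X}$, so the contradiction argument is not wasted effort in the larger scheme. One cosmetic quibble: your closing remark calls the case $D(1)=D'(1)=0$ a ``triple'' contact; it is a double zero of $D$ at $s=1$ (a triple contact would additionally require $D''(1)=0$, which by the computation in Lemma~\ref{lem:Q_+forSmallAlpha} happens only at $\alpha=\sqrt{2}-1$). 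This does not affect the proof.
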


\begin{proof}
We differentiate \eqref{eqn:quadratic} to obtain
\[
2G(s)G'(s) - \left(\left(2-p\right)s+p\right)G'(s) -(2-p)G(s)+ (1+\alpha s)\exp(\alpha(s-1)) = 0, 
\]
i.e.
\begin{equation}
\label{eq:expectationIntermediate}
G'(s) = \frac{(2-p)G(s)- (1+\alpha s)\exp(\alpha(s-1))}{2G(s)-\left(\left(2-p\right)s+p\right)}.
\end{equation}
Since $X < \infty$ almost surely, $G(1) = 1$.  So as $s \to 1$, the limit of the denominator in \eqref{eq:expectationIntermediate} is $0$. Assume (for a contradiction) that $p < 1-\alpha$.  Then the limit of the numerator is $1 - p - \alpha$, which is strictly positive. Hence by Abel's Theorem, which states that $\E{X} = G'(1-)$, the expectation of $X$ is infinite in absolute value, and since $X$ is non-negative we must have $\E{X} = G'(1-)= \infty$.  But then necessarily $2G(s)-\left(\left(2-p\right)s+p\right) = 2(G(s) -s) + p(s-1) \downarrow 0$ as $s \uparrow 1$.  In particular, this implies that $G(s)-s$ converges to zero from above. But since $G(s) \le 1$ for $s \in [0,1]$, we must then have  $G'(1-) \le 1$, which contradicts $G'(1-)= \infty$. It follows that $p \geq 1-\alpha$.
\end{proof}

The next lemma specifies that the probability generating function $G$ must take the branch $Q_+$ at least in a neighbourhood of $s=0$.
\begin{lemma}
 \label{lem:Q+AtS=0}
 For all $\alpha \in (0,1)$ there exists some $\varepsilon_\alpha >0$ such that for $s \in (0,\varepsilon_\alpha)$ we have 
 \[
 G(s) = \frac{1}{2} \left((2-p)s+p + \sqrt{((2-p)s+p)^2-4s\exp(\alpha(s-1))} \right).
 \]
\end{lemma}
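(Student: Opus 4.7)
The strategy is to identify which branch agrees with $G$ at $s=0$ and then propagate the equality by a continuity argument, using crucially that $p>0$ from Proposition~\ref{prop:p0}.

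First I would evaluate everything at $s=0$. The discriminant in \eqref{eq:Gplusminus} reduces to $p^2$, and hence
\[
Q_+(0) = \frac{1}{2}(p + |p|) = p, \qquad Q_-(0) = \frac{1}{2}(p - |p|) = 0,
\]
since $p \ge 0$. On the other hand $G(0) = \Prob{X=0} = p$ by definition of a probability generating function. By Proposition~\ref{prop:p0} we have $p \ge \tfrac{1}{2}\exp(-\alpha) > 0$, so $Q_+(0) = p$ and $Q_-(0) = 0$ are strictly different.

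Next I would argue that all three functions are continuous near $s=0$. Since $X$ is non-negative integer-valued, $G(s) = \sum_{k \ge 0} \Prob{X=k} s^k$ defines a power series of radius of convergence at least $1$, hence $G$ is continuous on $[0,1]$. The discriminant $D(s) := ((2-p)s+p)^2 - 4s\exp(\alpha(s-1))$ is continuous in $s$ with $D(0) = p^2 > 0$, so there exists $\varepsilon_\alpha > 0$ such that $D(s) > 0$ for all $s \in [0, \varepsilon_\alpha)$. On this interval, $Q_+$ and $Q_-$ are both continuous, and moreover $Q_+(s) - Q_-(s) = \sqrt{D(s)} > 0$, so $Q_+$ and $Q_-$ are everywhere distinct on $[0,\varepsilon_\alpha)$.

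Finally, for each $s \in [0, \varepsilon_\alpha)$ the value $G(s)$ must coincide with either $Q_+(s)$ or $Q_-(s)$ by \eqref{eq:Gplusminus}. Shrinking $\varepsilon_\alpha$ if necessary, the continuity of $G$, $Q_+$, $Q_-$, together with $G(0) = Q_+(0) \neq Q_-(0)$, forces $|G(s) - Q_+(s)| < |G(s) - Q_-(s)|$ throughout $[0,\varepsilon_\alpha)$, so $G(s) = Q_+(s)$ on this interval, as claimed. There is no real obstacle here, since the only subtlety (that the two branches are genuinely different near $0$, so that continuity pins down which branch is used) is exactly what the lower bound $p \ge \frac{1}{2}\exp(-\alpha) > 0$ from Proposition~\ref{prop:p0} provides.
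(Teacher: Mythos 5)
Your proof is correct and follows essentially the same route as the paper: identify the branch at $s=0$ using $G(0)=p>0$ from Proposition~\ref{prop:p0} (so that $G(0)=Q_+(0)\neq Q_-(0)$) and then propagate by continuity. You simply spell out in more detail the steps the paper leaves implicit (positivity of the discriminant near $0$ and the branch-separation argument).
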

\begin{proof}
 By Proposition~\ref{prop:p0}, we have $G(0) = p \neq 0$.  But $\lim_{s \to 0} Q_-(s) = 0$ and so $G$ and $Q_-$ differ at 0. (One may also check that $\lim_{s \to 0} Q_+(s) = p$.)  Since $G$ is continuous, this completes the proof.
\end{proof}

\subsection{Small $\alpha$}\label{subsec:small}
The following lemma shows that, for $\alpha \leq \sqrt{2}-1$,  $G(s)$ takes the branch $Q_+$ throughout $[0,1]$.

\begin{lemma}
\label{lem:Q_+forSmallAlpha}
 If $\alpha \leq \sqrt{2}-1$ then
  \begin{equation}
 \label{eq:Q_+forSmallAlpha}
 G(s) = \frac{1}{2} \left((2-p)s+p + \sqrt{((2-p)s+p)^2-4s\exp(\alpha(s-1))} \right).
 \end{equation}
 for all $s \in [0,1]$.
\end{lemma}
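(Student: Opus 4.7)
The plan is to show that for $\alpha \le \sqrt{2}-1$, the discriminant
$D(s) := ((2-p)s+p)^2 - 4s\exp(\alpha(s-1))$
appearing in~\eqref{eq:Gplusminus} is strictly positive on the open interval $(0,1)$. Given this, $Q_+$ and $Q_-$ do not meet on $(0,1)$; since $G(s) \in \{Q_+(s), Q_-(s)\}$ for each $s$, since $G$ is continuous, and since by Lemma~\ref{lem:Q+AtS=0} $G$ coincides with $Q_+$ near $0$, $G$ cannot switch branches inside $(0,1)$, so $G = Q_+$ there. Continuity at the endpoints (with $Q_+(0) = p = G(0)$ and $Q_+(1) = 1 = G(1)$) then extends the identity to all of $[0,1]$.

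The first step is to eliminate the unknown $p$. A direct differentiation gives
$\partial_p D(s) = 2(1-s)((2-p)s+p) \ge 0$
on $[0,1]$ for $p \in [0,2]$, so $D$ is non-decreasing in $p$. By Lemma~\ref{lem:pNotTooSmall}, $p \ge 1-\alpha$, so
$D(s) \ge D_0(s) := ((1+\alpha)s + (1-\alpha))^2 - 4s\exp(\alpha(s-1))$.
It therefore suffices to prove that $D_0(s) > 0$ for every $s \in (0,1)$ whenever $\alpha \le \sqrt{2}-1$.

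To establish the latter, I would pass to the logarithmic form: for $s > 0$, $D_0(s) > 0$ if and only if
$\tilde{h}(s) := 2\log((1+\alpha)s + (1-\alpha)) - \log(4s) - \alpha(s-1) > 0$.
One checks that $\tilde{h}(1) = 0$ and that $\tilde{h}(s) \to +\infty$ as $s \to 0^+$ (because of the $-\log(4s)$ term). Setting $\tilde{h}'(s) = 0$ and clearing denominators produces the quadratic $\alpha(1+\alpha)s^2 - (1+\alpha^2)s + (1-\alpha) = 0$, whose roots are $s = 1$ and $s^* = (1-\alpha)/(\alpha(1+\alpha))$. The critical arithmetic check is that $s^* \ge 1$ precisely when $\alpha \le \sqrt{2}-1$ (with equality at the threshold), so $\tilde{h}$ has no critical point in $(0,1)$. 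Combined with the boundary behaviour at $0$ and $1$ (and the fact that $\tilde{h}'(s) \to -\infty$ as $s \to 0^+$), this forces $\tilde{h}$ to be strictly decreasing on $(0,1]$, hence $\tilde{h} > 0$, i.e.\ $D_0 > 0$, on $(0,1)$.

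The main obstacle I anticipate is isolating an algebraic object whose qualitative behaviour changes exactly at $\alpha = \sqrt{2}-1$. Directly analysing $D_0$ as a transcendental function of $s$ is awkward, but after the log transform the critical-point equation collapses to a quadratic that automatically has $s=1$ as a root for all $\alpha$; the real work is then reduced to locating the \emph{other} root and observing that it crosses $1$ exactly at $\alpha = \sqrt{2}-1$. This is precisely what forces the lemma's cutoff to coincide with the thresholds in Theorem~\ref{thm:keyproblem} and Theorem~\ref{thm:GeoGWcritical}.
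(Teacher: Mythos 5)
Your proposal is correct, and its overall architecture is the same as the paper's: reduce to showing the discriminant is strictly positive on $(0,1)$ so that the branches $Q_+$ and $Q_-$ cannot meet, use monotonicity in $p$ together with Lemma~\ref{lem:pNotTooSmall} to replace $p$ by $1-\alpha$, and then prove $D_0(s) = ((1+\alpha)s+(1-\alpha))^2 - 4s\exp(\alpha(s-1)) > 0$ on $(0,1)$. Where you diverge is in this last, purely analytic step. The paper works with $D_0$ (called $g$ there) directly: it notes $g(1)=g'(1)=0$ and $g''(1) = -2(\alpha+1+\sqrt{2})(\alpha+1-\sqrt{2}) \ge 0$ for $\alpha \le \sqrt{2}-1$, and since $g''$ is strictly decreasing on $[0,1]$ this forces $g''>0$, hence $g'<0$, hence $g$ strictly decreasing down to $g(1)=0$. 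You instead pass to $\tilde{h} = \log$ of the ratio, whose critical-point equation collapses to the quadratic $\alpha(1+\alpha)s^2-(1+\alpha^2)s+(1-\alpha)=0$ with roots $1$ and $s^*=(1-\alpha)/(\alpha(1+\alpha))$, and you observe $s^*\ge 1$ exactly when $1-2\alpha-\alpha^2\ge 0$. Both arguments are sound (I checked your quadratic and the equivalence $s^*\ge 1 \Leftrightarrow \alpha\le\sqrt{2}-1$); yours has the advantage of exhibiting the threshold explicitly as the point where a second critical point of the discriminant crosses $s=1$, which connects nicely to the branch-switching analysis in Section~\ref{subsec:large}, while the paper's second-derivative computation is slightly shorter and reuses the quantities $g(1), g'(1), g''(1)$ verbatim in the proof of Lemma~\ref{lem:pLargeAlpha}. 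One pedantic point to tidy up when writing yours out: $\tilde{h}'(1)=0$ for every $\alpha$ (since $s=1$ is always a root of your quadratic), so the claim should be that $\tilde{h}$ has no critical point in the open interval $(0,1)$ and that $\tilde{h}'<0$ there by continuity from $\tilde{h}'(s)\to-\infty$ as $s\to 0^+$; this still yields strict decrease on $(0,1]$ and hence $\tilde{h}>\tilde{h}(1)=0$ on $(0,1)$.
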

\begin{proof}
 By the continuity of $G$ and Lemma \ref{lem:Q+AtS=0}, it suffices to show that for all $s \in (0,1)$, the branches $Q_+$ and $Q_-$ do not meet, i.e.\ that
 $$((2-p)s+p)^2-4s\exp(\alpha(s-1)) > 0.$$
Since $((2-p)s+p)^2-4s\exp(\alpha(s-1))$ is increasing in $p$ for $s \in (0,1)$, by Lemma \ref{lem:pNotTooSmall} it is enough to show that $g(s) := ((1+\alpha)s+(1-\alpha))^2-4s\exp(\alpha(s-1)) >0$ for all $s \in (0,1)$.
 
The first and second derivatives of $g$ are
 \begin{align*}
 g'(s) & = 2(1+\alpha)^2s+2(1-\alpha^2)-4(1+\alpha s)\exp(\alpha(s-1))\\
 \intertext{and}
 g''(s) & = 2(1+\alpha)^2-4(2\alpha+\alpha^2s)\exp(\alpha(s-1)).
 \end{align*}
Observe that $g(1)=g'(1)=0$ and $g''(1)=-2(\alpha+1+\sqrt{2})(\alpha+1-\sqrt{2})$. If $\alpha \leq \sqrt{2}-1$, then $g''(1) \geq 0$. 
 Since $g''(s)$ is a strictly decreasing function for $s \in [0,1]$, this then gives that $g''(s) >0$ for all $s\in(0,1)$. Hence $g'(s)$ is strictly increasing for $s\in(0,1)$.  But then since  $g'(1) = 0$, by continuity of $g'$ at 1 we must have $g'(s) <0$ for $s\in(0,1)$. Hence, $g(s)$ is strictly decreasing for $s\in(0,1)$, and it follows that $((1+\alpha)s+(1-\alpha))^2-4s\exp(\alpha(s-1)) >0$ for all $s \in (0,1)$, as required.
\end{proof}

The following lemma establishes the values of $p$ and $\E{X}$ for $\alpha \in (0,\sqrt{2}-1]$.
\begin{lemma}
 \label{lem:psmallAlpha}
 For all $\alpha \in (0,\sqrt{2}-1]$, we have $p = 1-\alpha$ and $\E{X} = \frac{1}{2}(1+\alpha - \sqrt{1 - 2\alpha -\alpha^{2}})$.
\end{lemma}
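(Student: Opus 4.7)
The plan is to combine Lemma~\ref{lem:pNotTooSmall} with the explicit formula from Lemma~\ref{lem:Q_+forSmallAlpha} via an analysis of $G$ near $s=1$. Since $p \ge 1-\alpha$ by Lemma~\ref{lem:pNotTooSmall}, the bulk of the work is to establish the reverse inequality; once $p = 1-\alpha$ is pinned down, the value of $\E{X}$ falls out of a Taylor expansion of the formula for $G$.

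For the upper bound, write $h(s) := (2-p)s + p$ and $k(s) := h(s)^2 - 4s\exp(\alpha(s-1))$, so that Lemma~\ref{lem:Q_+forSmallAlpha} reads $G(s) = \frac{1}{2}(h(s) + \sqrt{k(s)})$ on $[0,1]$, and argue by contradiction. Suppose $p > 1-\alpha$. A direct computation gives $k(1) = 0$ and $k'(1) = 4(1-p-\alpha) < 0$, while the proof of Lemma~\ref{lem:Q_+forSmallAlpha} shows $k(s) > 0$ on $(0,1)$. A first-order Taylor expansion at $s = 1$ therefore yields $\sqrt{k(s)} \sim 2\sqrt{p+\alpha-1}\sqrt{1-s}$ as $s \uparrow 1$; differentiating the formula,
\[
G'(s) = \frac{h'(s)}{2} + \frac{k'(s)}{4\sqrt{k(s)}} \;\longrightarrow\; -\infty \qquad \text{as } s \uparrow 1,
\]
because the first term is bounded while the second tends to $-\infty$. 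But $G$ is the probability generating function of a non-negative integer-valued random variable, so its power-series coefficients are non-negative and $G'(s) \ge 0$ for every $s \in [0,1)$, a contradiction. Hence $p = 1-\alpha$.

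Plugging $p = 1-\alpha$ back into the formula, one finds $k(1) = k'(1) = 0$ and $k''(1) = 2(1-2\alpha-\alpha^2) \ge 0$ (using $\alpha \le \sqrt{2}-1$), so Taylor's theorem gives $k(s) = (1-2\alpha-\alpha^2)(s-1)^2 + O((s-1)^3)$ and hence $\sqrt{k(s)}/|s-1| \to \sqrt{1-2\alpha-\alpha^2}$ as $s \to 1$. Combined with $h(s) - 2 = (1+\alpha)(s-1)$, this gives
\[
G(s) - 1 = \frac{s-1}{2}\bigl(1+\alpha - \sqrt{1-2\alpha-\alpha^2}\bigr) + o(s-1) \quad \text{as } s \uparrow 1,
\]
and Abel's theorem yields $\E{X} = G'(1-) = \frac{1}{2}(1+\alpha - \sqrt{1-2\alpha-\alpha^2})$. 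The main subtlety lies at the boundary $\alpha = \sqrt{2}-1$: there $1-2\alpha-\alpha^2 = 0$ so $\sqrt{k(s)} = O(|s-1|^{3/2}) = o(|s-1|)$ and the quadratic approximation of $k$ degenerates, but the expansion above still produces the correct value $\E{X} = (1+\alpha)/2 = 1/\sqrt{2}$.
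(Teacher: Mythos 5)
Your proof is correct. The first half (establishing $p = 1-\alpha$) is essentially the paper's argument: both differentiate the explicit branch formula from Lemma~\ref{lem:Q_+forSmallAlpha}, observe that if $p > 1-\alpha$ then the fraction has numerator tending to the negative constant $2(1-p-\alpha)$ while the denominator tends to $0^+$, and conclude $G'(s) \to -\infty$ as $s \uparrow 1$; you derive the contradiction from $G'(s) \ge 0$ on $[0,1)$ (non-negativity of the power-series coefficients), whereas the paper notes that this would force $\E{X} = G'(1-) = -\infty$, an immaterial difference. The second half is where you genuinely diverge. The paper returns to the implicitly differentiated identity \eqref{eq:expectationIntermediate}, applies L'H\^opital's rule to obtain the quadratic $2G'(1-)^2 - 2(1+\alpha)G'(1-) + \alpha(2+\alpha) = 0$, and then must break the resulting sign ambiguity by an auxiliary argument that $X$ is stochastically increasing in $\alpha$. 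You instead expand the discriminant $k$ to second order at $s=1$ (using $k(1)=k'(1)=0$ and $k''(1) = 2(1-2\alpha-\alpha^2) \ge 0$) and compute $\lim_{s \uparrow 1}(G(s)-1)/(s-1)$ directly; since $\sqrt{k(s)} \ge 0$ while $s-1<0$, the sign of the square-root contribution is forced and the minus sign in the answer emerges automatically. Your route buys a derivation with no monotonicity argument and handles the boundary case $\alpha = \sqrt{2}-1$ transparently (there $k''(1)=0$, so $\sqrt{k(s)} = o(|s-1|)$ and the formula degenerates to $(1+\alpha)/2$ as it should); the paper's route avoids Taylor-expanding the square root at the cost of the extra sign-selection step. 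One pedantic remark: what you actually compute is the difference quotient $\lim_{s\uparrow 1}(G(s)-1)/(s-1)$ rather than $\lim_{s\uparrow 1}G'(s)$; both equal $\E{X}$ for a probability generating function by monotone convergence, so the conclusion stands.
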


\begin{proof}
We differentiate the expression for $G(s)$ stated in Lemma~\ref{lem:Q_+forSmallAlpha} to find 
\begin{equation}\label{eq:Gdash}
   G'(s) = 1-\frac{p}{2}+\frac{1}{2}\cdot \frac{(2-p)^2s+p(2-p)-2(1+\alpha s)\exp(\alpha(s-1))}{\sqrt{((2-p)s+p)^2-4s\exp(\alpha(s-1))}}.
 \end{equation}
 As $s \uparrow 1$, the numerator $(2-p)^2s+p(2-p)-2(1+\alpha s)\exp(\alpha(s-1))$ tends to $2(1-p-\alpha)$ while the denominator tends to $0$ from above. (As shown in the proof of Lemma~\ref{lem:Q_+forSmallAlpha}, the denominator is strictly positive for $s\in(0,1)$.) If we had $p>1-\alpha$, $G'(s)$ would tend to $-\infty$ which gives $\E{X}=G'(1-)=-\infty$, a contradiction. It then follows from Lemma \ref{lem:pNotTooSmall} that $p=1-\alpha$.

In particular, \eqref{eq:Gdash} becomes
 \begin{equation}\label{eq:Gdashalpha}
   G'(s) = \frac{1}{2} \left(1+\alpha+ \frac{(1+\alpha)^2s+1-\alpha^2-2(1+\alpha s)\exp(\alpha(s-1))}{\sqrt{((1+\alpha)s+1-\alpha)^2-4s\exp(\alpha(s-1))}} \right). 
 \end{equation}
Taking limit as $s \uparrow 1$ in \eqref{eq:expectationIntermediate} and using L'H\^opital's rule, we obtain that
\begin{align*}
G'(1-)& = \lim_{s \uparrow 1} \frac{(2-p)G(s)- (1+\alpha s)\exp(\alpha(s-1))}{2G(s)-\left(\left(2-p\right)s+p\right)}\\
& = \lim_{s \uparrow 1} \frac{(1+\alpha)G'(s)- (2\alpha+\alpha^2s)\exp(\alpha(s-1))}{2G'(s)-(1+\alpha)}  =\frac{(1+\alpha)G'(1-)- (2\alpha+\alpha^2)}{2G'(1-)-(1+\alpha)}. 
\end{align*}
Hence, $G'(1-)$ solves
$$
2G'(1-)^2-2(1+\alpha)G'(1-)+\alpha(2+\alpha)=0.
$$
This has solutions $\frac{1}{2} (1+\alpha \pm \sqrt{1 - 2 \alpha -\alpha^{2}}).$  To determine the correct sign, we note that $X$ is stochastically increasing in $\alpha$.  Since $\frac{1}{2}(1+\alpha \pm \sqrt{1- 2\alpha - \alpha^{2}})$ has derivatives 
\[
\frac{1}{2}\left(1 \mp \frac{1+\alpha}{\sqrt{1 - 2\alpha -\alpha^{2}}}\right)
\]
respectively, the requirement that the derivative be non-negative yields that $\E{X} = \frac{1}{2} (1+\alpha - \sqrt{1 - 2\alpha -\alpha^{2}})$.
\end{proof}

\subsection{Large $\alpha$}\label{subsec:large}
This last subsection deals with the case $\alpha > \sqrt{2}-1$. 
\begin{lemma}\label{lem:pLargeAlpha}
For $\alpha > \sqrt{2}-1$, we have $p>1-\alpha$ and $\E{X} = \infty$.
\end{lemma}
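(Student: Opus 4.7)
The plan is to prove $p > 1-\alpha$ first, and then deduce $\E{X} = \infty$ by invoking the dichotomy established in Proposition~\ref{prop:p0}.

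For the strict inequality $p > 1-\alpha$, I would argue by contradiction: assume $p = 1-\alpha$. Under this assumption, $(2-p)s+p = (1+\alpha)s + (1-\alpha)$, so the discriminant appearing inside the square root in \eqref{eq:Gplusminus} becomes exactly the function
\[
g(s) = ((1+\alpha)s + (1-\alpha))^2 - 4s\exp(\alpha(s-1))
\]
that was already analysed in the proof of Lemma~\ref{lem:Q_+forSmallAlpha}. That computation gave $g(1) = g'(1) = 0$ and
\[
g''(1) = -2(\alpha + 1 + \sqrt{2})(\alpha + 1 - \sqrt{2}).
\]
For $\alpha > \sqrt{2} - 1$ this quantity is strictly negative, so a second-order Taylor expansion about $s=1$ shows $g(s) < 0$ for all $s$ in some left-neighbourhood of $1$. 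This directly contradicts \eqref{eq:discriminant}, which forces the discriminant to be non-negative on $(0,1]$ (since $G$ takes real values). Hence the assumption $p = 1-\alpha$ is untenable, and combined with Lemma~\ref{lem:pNotTooSmall} this yields $p > 1-\alpha$.

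For the expectation, I would then invoke the second statement of Proposition~\ref{prop:p0}, which says that either $\E{X} = \infty$ or $p = 1-\alpha$. Since the latter has just been ruled out, we must have $\E{X} = \infty$.

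The main obstacle is really just recognising that the sign-change of $g''(1)$ at $\alpha = \sqrt{2}-1$, which drove the success of the argument for small $\alpha$, immediately obstructs the candidate value $p = 1-\alpha$ in the large-$\alpha$ regime; once that connection is made, the rest is essentially a one-line Taylor argument plus an appeal to already-established results. No new calculation is needed beyond what was done for Lemma~\ref{lem:Q_+forSmallAlpha}.
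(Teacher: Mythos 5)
Your proposal is correct and follows essentially the same route as the paper's own proof: assume $p=1-\alpha$, reuse the computation $g(1)=g'(1)=0$, $g''(1)<0$ from Lemma~\ref{lem:Q_+forSmallAlpha} to get $g(s)<0$ just below $s=1$, contradict \eqref{eq:discriminant}, and then conclude $\E{X}=\infty$ from the dichotomy in Proposition~\ref{prop:p0}. No gaps.
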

\begin{proof}
Assume, for a contradiction, that $p=1-\alpha$.
Recall from the proof of Lemma~\ref{lem:Q_+forSmallAlpha} that for $g(s) = ((1+\alpha)s+(1-\alpha))^2-4s\exp(\alpha(s-1))$ we have $g(1)=g'(1)=0$ and
\[
g''(1)=-2(\alpha+1+\sqrt{2})(\alpha+1-\sqrt{2}),
\]
which is strictly negative when $\alpha > \sqrt{2}-1$. Hence, there exists some $\delta_\alpha >0$ such that for $s \in (1-\delta_\alpha,1)$, we have $g(s)<0$. This contradicts (\ref{eq:discriminant}) and the first result follows.  The second follows immediately from Proposition~\ref{prop:p0}.
\end{proof}

The next lemma shows that for $\alpha > \sqrt{2}-1$, $G(s)$ is equal to $Q_-$ in a neighbourhood of $1$ and so cannot be equal to $Q_+$ throughout the interval $[0,1]$.

\begin{lemma}\label{lem:BigAlpha-around1}
If $\alpha > \sqrt{2}-1$, then there exists $\delta_{\alpha}>0$ such that $$
G(s) = \frac{1}{2} \left( (2-p)s+p - \sqrt{((2-p)s+p)^2-4s\exp(\alpha(s-1))} \right) \quad \text{for $s \in (1-\delta_{\alpha},1)$.}
$$
\end{lemma}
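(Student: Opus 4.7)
The plan is to show that the branch $Q_+$ overshoots the value $1$ immediately to the left of $s = 1$, which forces $G$—being a probability generating function, and hence bounded by $1$ on $[0,1]$—to take the branch $Q_-$ there instead. The key observation is that although the discriminant $D(s) := ((2-p)s+p)^2 - 4s\exp(\alpha(s-1))$ vanishes at $s = 1$ so that $Q_+(1) = Q_-(1) = 1$, the derivative $\frac{d}{ds}\sqrt{D(s)}$ blows up at $s = 1$, and the sign of that blow-up is what distinguishes the two branches.

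First I would verify that $D(1) = 0$ and compute $D'(1) = 4(1 - p - \alpha)$ by direct differentiation. At this stage I invoke Lemma~\ref{lem:pLargeAlpha}, which gives $p > 1 - \alpha$ for $\alpha > \sqrt{2} - 1$, so $D'(1) < 0$. Hence $D$ is strictly decreasing at $s = 1$, and there is some $\delta_\alpha > 0$ with $D(s) > 0$ on $(1 - \delta_\alpha, 1)$. On this open interval $Q_+$ and $Q_-$ are continuous and strictly distinct, so continuity of $G$, together with the pointwise identification $G(s) \in \{Q_+(s), Q_-(s)\}$ coming from \eqref{eqn:quadratic}, forces $G$ to coincide with a single branch throughout.

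Next I would rule out $G = Q_+$ on a left neighbourhood of $1$. Differentiating the formula for $Q_+$ gives
\[
Q_+'(s) = \frac{2-p}{2} + \frac{D'(s)}{4\sqrt{D(s)}}.
\]
Since $D'(s) \to D'(1) < 0$ and $\sqrt{D(s)} \to 0^+$ as $s \uparrow 1$, the right-hand side tends to $-\infty$. Thus $Q_+$ is strictly decreasing on a left neighbourhood of $1$; combined with $Q_+(1) = 1$, this gives $Q_+(s) > 1$ for all $s$ sufficiently close to $1$ from below. But $G(s) \le 1$ on $[0,1]$, so $G \ne Q_+$ there. Shrinking $\delta_\alpha$ if necessary, this leaves $G = Q_-$ on $(1 - \delta_\alpha, 1)$, which is exactly the claim.

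The only substantive step is the sign computation $D'(1) = 4(1-p-\alpha) < 0$, which in turn rests on the strict inequality $p > 1 - \alpha$ established in Lemma~\ref{lem:pLargeAlpha}; once that is in hand, the rest of the argument is a soft combination of continuity, the quadratic identity, and the PGF bound $G \le 1$.
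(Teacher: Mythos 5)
Your proof is correct and follows essentially the same route as the paper's: both arguments rest on the computation that $Q_+'(s)\to-\infty$ as $s\uparrow 1$, because the relevant numerator tends to the negative limit $4(1-p-\alpha)$ (using $p>1-\alpha$ from Lemma~\ref{lem:pLargeAlpha}) while the square root of the discriminant tends to $0^+$. The only differences are minor and in your favour: you conclude via the elementary bound $G\le 1$ (the paper instead contradicts $\E{X}=\infty$), and you explicitly justify, using strict positivity of the discriminant on $(1-\delta_\alpha,1)$ and connectedness, that $G$ must coincide with a single branch on that interval --- a step the paper leaves implicit.
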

\begin{proof}
Suppose, for a contradiction, that there exists $\delta > 0$ such that
$$
G(s) = \frac{1}{2} \left( (2-p)s+p + \sqrt{((2-p)s+p)^2-4s\exp(\alpha(s-1))} \right) \quad \text{for $s \in (1-\delta,1)$.}
$$
Then differentiating $G(s)$ would give 
$$G'(s) = 1-\frac{p}{2}+\frac{1}{2}\cdot \frac{(2-p)^2s+p(2-p)-2(1+\alpha s)\exp(\alpha(s-1))}{\sqrt{((2-p)s+p)^2-4s\exp(\alpha(s-1))}}.$$
The numerator of the fraction tends to the negative quantity $2(1-\alpha-p)$ as $s \uparrow 1$, while the denominator tends to $0+$ as $s \uparrow 1.$
But then $\E{X} = G'(1-) = - \infty$, which contradicts $\E{X} = \infty.$ 
\end{proof}

\begin{lemma}\label{lem:transition}
Let $\alpha > \sqrt{2}-1$. Then there exists $t\in(0,1)$ such that 
\[
G(s) =
\begin{cases}
\frac{1}{2} \left( (2-p)s+p + \sqrt{((2-p)s+p)^2-4s\exp(\alpha(s-1))} \right) \quad 
& \text{if $s<t$ }\\
\frac{1}{2} \left( (2-p)s+p - \sqrt{((2-p)s+p)^2-4s\exp(\alpha(s-1))} \right) \quad 
& \text{if $s\geq t$.}
\end{cases}
\]
\end{lemma}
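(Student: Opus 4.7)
The plan is to identify $t$ as the unique zero in $(0,1)$ of the discriminant
\[
h(s) := ((2-p)s+p)^2 - 4s\exp(\alpha(s-1));
\]
once this is established, the conclusion follows from continuity and a connectedness argument. Note that $h \ge 0$ on $[0,1]$ by \eqref{eq:discriminant} and that $h(1) = 0$ identically.

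The crux is to show $h$ has at most one zero in $(0,1)$. A direct calculation gives
\[
h'''(s) = -4\alpha^2(3+\alpha s)\exp(\alpha(s-1)),
\]
which is strictly negative on $[0,1]$; hence $h''$ is strictly decreasing on $[0,1]$ and has at most one zero there, so by Rolle's theorem $h'$ has at most two distinct zeros in $(0,1)$. Suppose for contradiction that $h$ had distinct zeros $t_1 < t_2$ in $(0,1)$. Since $h \ge 0$ on $(0,1]$, each $t_i$ is an interior local minimum and therefore $h'(t_1) = h'(t_2) = 0$. Rolle applied to $h$ on $[t_1,t_2]$ and on $[t_2,1]$ (using $h(1)=0$) then produces two further distinct zeros of $h'$ strictly inside $(t_1,1)$, yielding four distinct zeros of $h'$ in $(0,1)$ and contradicting the bound above.

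Next, I would check that $h$ has at least one zero in $(0,1)$. If not, then $Q_+ > Q_-$ strictly on $(0,1)$, and the sets $\{s \in (0,1) : G(s) = Q_+(s)\}$ and $\{s \in (0,1) : G(s) = Q_-(s)\}$ are closed, disjoint, and cover $(0,1)$ (since $G$ satisfies the quadratic \eqref{eqn:quadratic}); by connectedness one of them must be empty, contradicting Lemmas~\ref{lem:Q+AtS=0} and~\ref{lem:BigAlpha-around1}, which place $G$ on different branches near $0$ and near $1$. Hence there is a unique $t \in (0,1)$ with $h(t)=0$. Applying the same connectedness argument on each of the components $(0,t)$ and $(t,1)$ of $\{h>0\}$, and using Lemmas~\ref{lem:Q+AtS=0} and~\ref{lem:BigAlpha-around1} respectively to pick out the branch, gives $G = Q_+$ on $(0,t)$ and $G = Q_-$ on $(t,1)$; continuity at $t$ together with $Q_+(t) = Q_-(t)$ extends the $Q_-$ formula to $s=t$, yielding the stated piecewise expression.

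The main obstacle will be the zero-counting in the second paragraph: the key observation that every interior zero of $h$ in $(0,1)$ is automatically a zero of $h'$ (since $h \ge 0$ forces each such zero to be a local minimum) is what promotes a naive Rolle bound into the sharper bound $2(k-1) \le 2$ on the number $k$ of distinct zeros of $h$ in $(0,1]$; the sign computation for $h'''$ and the connectedness arguments are essentially routine by comparison.
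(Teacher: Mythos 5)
Your proof is correct, but the key zero-counting step is done by a genuinely different route from the paper's. The paper argues that any point where $G$ switches branch must be a zero of $h$, hence (since $h\ge 0$) a common zero of $h$ and $h'$; eliminating the exponential between the equations $h(s)=0$ and $h'(s)=0$ yields the explicit quadratic $f(s)=\alpha(2-p)s^2+(\alpha p+p-2)s+p$, which has at most two roots, and a parity argument (different branches near $0$ and near $1$) then forces exactly one switch. You instead bound the number of zeros of $h$ itself: the computation $h'''(s)=-4\alpha^2(3+\alpha s)\exp(\alpha(s-1))<0$ is correct, and combined with the observation that every interior zero of $h$ is a double zero (being a local minimum) and that $h(1)=0$, your repeated application of Rolle's theorem does show that $h$ has at most one zero in $(0,1)$; the connectedness argument supplying existence and the branch identification is also sound. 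Your approach delivers immediately the uniqueness of the interior zero of $h$ (which the paper only establishes in the subsequent lemma on the turning points $t_1<t_2$), and it avoids the algebraic elimination entirely. What it does not give you is the identification of $t$ as a root of the explicit quadratic $f$, which the paper exploits afterwards to show $t$ is the \emph{smaller} root of $f$, to obtain the closed form $s_p$ appearing in Theorem~\ref{thm:GeoGWcritical}, and to derive the upper bound on $p$; if you follow your route you would still need that computation later.
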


\begin{proof}
First we analyse the roots of $h(s) := ((2-p)s+p)^2-4s\exp(\alpha(s-1))$ as a function of $s$ for $s\in(0,1)$. We note that for $G(s)$ to have a solution as at (\ref{eq:Gplusminus}), we must have $h(s) \ge 0$ for $s \in [0,1]$, and so any root of $h$ in $(0,1)$ must also be a turning point.  We obtain two equations for the root(s) to satisfy:
$$
\begin{cases}
    ((2-p)s+p)^2-4s\exp(\alpha(s-1)) = 0\\
  ((2-p)^2s+p(2-p))-2(1+\alpha s)\exp(\alpha(s-1)) = 0,
    \end{cases}
$$
where the second equation was obtained by differentiating the first.
Combining these, we obtain that
\begin{align*}
((2-p)s+p)^2(1+\alpha s) &= 4s(1+\alpha s)\exp(\alpha(s-1)) \\
& = 2s((2-p)^2s+p(2-p)).
\end{align*}
Dividing by $(2-p)s+p$ gives
$$((2-p)s+p)(1+\alpha s)= 2s(2-p),$$
and rearranging gives the quadratic equation
\begin{equation}\label{eq:rootntp}
   f(s) := \alpha (2-p)s^2 + (\alpha p+p-2)s+p=0,
\end{equation}
which has at most two roots in $(0,1)$.

Now by Lemmas~\ref{lem:Q+AtS=0} and \ref{lem:BigAlpha-around1} there must exist some $t \in (0,1)$ where the $Q_+$ and $Q_-$ meet and $G(s)$ changes branch. We note that $t$ must be a root of $f$. We have shown that there can be at most two such roots, which implies that $G(s)$ switches branch at most twice in $[0,1]$. But since $G(s)$ takes different branches near 0 and near 1, it must switch exactly once.
\end{proof}

With the results in the above proof, we pursue our analysis of the function $h(s) = ((2-p)s+p)^2-4s\exp(\alpha(s-1))$.

\begin{lemma}
Let $\alpha > \sqrt{2}-1$. Then $h$ has two turning points, $t_1$ and $t_2$, which are such that $0<t_1<t_2<1$. Both $t_1$ and $1$ are roots of $h$, and $t_1$ is the point where $G$ switches branch. Moreover, $h$ is monotonically decreasing on $(0,t_1)$, increasing on $(t_1,t_2)$, and decreasing on $(t_2,1)$.  
\end{lemma}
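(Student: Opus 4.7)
My plan is to combine Lemma~\ref{lem:transition} with the discriminant bound $h\geq 0$ on $[0,1]$ and a derivative-counting argument. First I would record that $h(0)=p^{2}>0$, $h(1)=((2-p)+p)^{2}-4=0$, and a short computation gives $h'(1)=4(1-p-\alpha)$. By Lemma~\ref{lem:pLargeAlpha} we have $p>1-\alpha$, so $h'(1)<0$; together with $h\geq 0$ on $[0,1]$, this means $s=1$ is a simple root of $h$. The other root in $[0,1]$ will be the transition point $t_{1}$ from Lemma~\ref{lem:transition}, which, as the derivation of that lemma makes explicit, satisfies the system $h(t_{1})=h'(t_{1})=0$; in particular $t_{1}$ is already a turning point of $h$.

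The heart of the argument is to show there is exactly one other turning point. I would compute
\[
h'''(s) = -4\alpha^{2}(3+\alpha s)\exp(\alpha(s-1)),
\]
which is strictly negative on $[0,1]$. Thus $h''$ is strictly decreasing and has at most one zero in $(0,1)$, so by Rolle's theorem $h'$ has at most two zeros there. Conversely, applying Rolle's theorem to $h$ on $[t_{1},1]$, where $h(t_{1})=h(1)=0$, produces a zero $t_{2}$ of $h'$ in $(t_{1},1)$; and $t_{2}<1$ because $h'(1)\neq 0$. Hence $h$ has exactly two turning points, $0<t_{1}<t_{2}<1$.

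For the monotonicity, because $h''$ is strictly decreasing and $h'$ has two zeros, $h''$ must change sign exactly once, making $h'$ unimodal on $(0,1)$, first strictly increasing and then strictly decreasing. Combined with $h'(t_{1})=h'(t_{2})=0$, this pins down $h'<0$ on $(0,t_{1})$, $h'>0$ on $(t_{1},t_{2})$, and $h'<0$ on $(t_{2},1)$, giving the desired decreasing/increasing/decreasing pattern for $h$. Consequently $t_{1}$ is a local minimum, $t_{2}$ is a local maximum, and $\{t_{1},1\}$ are the only roots of $h$ in $[0,1]$; this identifies $t_{1}$ as the branch-switching point supplied by Lemma~\ref{lem:transition}. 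The only delicate point in the argument is ensuring that Rolle's theorem places $t_{2}$ strictly inside $(t_{1},1)$ and not at the endpoint $1$, for which the essential input is that $h'(1)\neq 0$ when $\alpha>\sqrt{2}-1$; everything else is routine given the clean sign of $h'''$.
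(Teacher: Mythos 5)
Your proof is correct and takes essentially the same route as the paper: both arguments use the monotonicity of $h''$ to cap the number of zeros of $h'$ at two, use $h\ge 0$ together with $h(t_1)=h(1)=0$ (via Rolle in your case, via the need for an interior maximum in the paper's) to produce $t_2\in(t_1,1)$, and then read off the decreasing/increasing/decreasing pattern. Your computations of $h'(1)=4(1-p-\alpha)<0$ and $h'''<0$ are correct and simply make explicit steps the paper leaves implicit.
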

\begin{proof}
First we note that 
\[
h'(s)=2((2-p)^2s+p(2-p))-4(1+\alpha s)\exp(\alpha(s-1))
\]
and 
\[
h''(s)=2(2-p)^2-4(2\alpha+\alpha^2s)\exp(\alpha(s-1)).
\]
The second derivative $h''(s)$ is monotonically decreasing in $s$ on $(0,1)$, so that either $h'(s)$ is monotonic or has exactly one turning point. In either case, $h'(s)$ has at most two roots, which means that $h$ has at most two turning points in $(0,1)$. 

We have already established that $h$ stays positive on $(0,1)$ except for one or two points where $h$ and $h'$ are both equal to zero.  Let $t_1$ be the leftmost of these.  Then we have $h(0) = p^2 > 0$, $h(t_1) = 0$ and $h(1) = 0$.  In order to have $h(s) \ge 0$ on $(t_1,1)$, there must be a turning point $t_2 \in (t_1, 1)$ at which $h(t_2) > 0$.  Since $h$ cannot have three turning points in $(0,1)$, there can be no roots of $h$ in $(t_1, 1)$.  The description of $h$ given in the statement of the lemma follows.
\end{proof}

This also provides us with more information on the values of $p$ and the turning point $t$. 

\begin{lemma}
Let $\alpha > \sqrt{2}-1$. Then $1-\alpha < p \le \frac{2}{1+(3+2\sqrt{2})\alpha}$, and the point $t$ where $G(s)$ switches branch is the smaller root of the quadratic function $f$ from \eqref{eq:rootntp}.
\end{lemma}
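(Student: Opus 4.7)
The plan is to handle the three claims — the lower bound $p > 1-\alpha$, the upper bound $p \le 2/(1+(3+2\sqrt{2})\alpha)$, and the identification of $t$ as the smaller root of $f$ — in turn. The lower bound is already the content of Lemma~\ref{lem:pLargeAlpha}, so I concentrate on the other two.

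For the upper bound, I would use that $t \in (0,1)$ is a root of $f$, so the discriminant of $f$ viewed as a quadratic in $s$ must be non-negative. A direct expansion gives
\[
((1+\alpha)p - 2)^2 - 4\alpha p(2-p) = (\alpha^2 + 6\alpha + 1)p^2 - 4(3\alpha+1)p + 4.
\]
Viewing this as a quadratic in $p$ with positive leading coefficient, I would use the factorisation $\alpha^2 + 6\alpha + 1 = (1 + (3+2\sqrt{2})\alpha)(1 + (3-2\sqrt{2})\alpha)$ to compute its roots in $p$ as $p_\pm = 2/(1 + (3 \mp 2\sqrt{2})\alpha)$. Hence the discriminant is non-negative iff $p \le p_-$ or $p \ge p_+$; and since $p \le 1 < p_+$ for $\alpha \in (0,1)$, only the first option is feasible, giving the claimed bound.

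For the final claim, I would first record the polynomial identity
\[
A(s)\, f(s) = (1+\alpha s)\, h(s) - s\, h'(s), \qquad A(s) := (2-p)s + p,
\]
which follows from $f(s) = A(s)(1+\alpha s) - 2s(2-p)$ (a rearrangement of the original derivation of $f$ in the proof of Lemma~\ref{lem:transition}) together with a direct expansion of $(1+\alpha s)h(s) - s h'(s)$ using $h(s) = A(s)^2 - 4s\exp(\alpha(s-1))$: the exponential terms cancel and what remains is $A(s) \cdot [A(s)(1+\alpha s) - 2s(2-p)] = A(s) \cdot f(s)$. At $s = t$ we have $h(t) = h'(t) = 0$, and Taylor-expanding both sides near $t$ yields $A(s) f(s) = -t\, h''(t)(s-t) + O((s-t)^2)$. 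Since $t$ is a local minimum of $h$ with $h \ge 0$ in a neighbourhood, $h''(t) \ge 0$; in fact $h''(t) > 0$ strictly when $p < p_-$, which can be checked by using $h(t) = 0$ and $h'(t) = 0$ to eliminate the exponential and reduce positivity of $h''(t)$ to the inequality $\alpha t < \sqrt{2} - 1$. Thus $A(s) f(s) < 0$, and so (since $A(t) > 0$) $f(s) < 0$, for $s$ slightly greater than $t$; the other root of $f$ therefore lies above $t$, and $t$ is the smaller root. In the boundary case $p = p_-$, $f$ has a double root at $t$ and the claim is automatic.

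The most delicate step is locating the correct identity between $f$ and $h, h'$; once this is in hand, everything reduces to a local Taylor expansion around the double zero of $h$ at $t$, plus elementary sign-chasing.
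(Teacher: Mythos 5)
Your argument is essentially sound and runs close to the paper's, with one step asserted rather than proved. For the upper bound on $p$ you use exactly the paper's mechanism (the discriminant of $f$ from \eqref{eq:rootntp} must be non-negative because $f$ has a real root $t\in(0,1)$, the branch-switch point from Lemma~\ref{lem:transition}); you merely organise the algebra as a quadratic in $p$ and factor $\alpha^2+6\alpha+1$, where the paper rewrites the discriminant as $(6-(3+\alpha)p)^2-8(2-p)^2$ and takes square roots. Both are correct. For the identification of $t$ as the smaller root, your identity $A(s)f(s)=(1+\alpha s)h(s)-sh'(s)$ is precisely the relation the paper exploits: the paper evaluates it at the \emph{other} root $r$ of $f$ (where $h(r)>0$) to get $h'(r)>0$ and hence $r>t$ via the monotonicity structure of $h$ established in the preceding lemma; you instead expand it locally at $t$ to conclude $f<0$ just to the right of $t$. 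Your version has the modest advantage of not needing that structural description of $h$, only that $h\ge 0$ on $[0,1]$ (which is \eqref{eq:discriminant}) and $h(t)=h'(t)=0$.

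The gap is your claim that $h''(t)>0$ when $p<p_-$. You correctly reduce this to $\alpha t<\sqrt{2}-1$, but you never establish that inequality, and proving it directly would seem to require locating $t$ --- which is uncomfortably close to circular with the statement being proved. Fortunately your own identity closes the loop without any such bound: differentiating $A(s)f(s)=(1+\alpha s)h(s)-sh'(s)$ and evaluating at $s=t$, where $h(t)=h'(t)=f(t)=0$, gives
\[
A(t)f'(t)=-t\,h''(t).
\]
Since $t\in(0,1)$ is an interior minimum of the non-negative function $h$, we have $h''(t)\ge 0$ and hence $f'(t)\le 0$. If $f'(t)<0$ then $t$ is the smaller root of the upward-opening parabola $f$ and you are done; if $f'(t)=0$ then $t$ is a double root of $f$ and the claim holds vacuously (this is exactly your boundary case $p=p_-$). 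This one-line computation both repairs the gap and renders the Taylor expansion unnecessary. The lower bound $p>1-\alpha$ is, as you say, Lemma~\ref{lem:pLargeAlpha}, which is also how the paper handles it.
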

\begin{proof}
It is clear that $t$ needs to be a root of $f$. Now $f(0)$ and $f(1)$ are both strictly positive, and there must also be a root in $(0,1)$ in order for $G$ to change branch.  So the discriminant $(\alpha p + p -2)^2 - 4\alpha p (2-p)$ must stay non-negative and both (including repeated) roots of $f$ need to lie in $(0,1)$. Now
\[
(\alpha p + p -2)^2 - 4\alpha p (2-p) = (6 - (3 + \alpha) p)^2 - 8 (2-p)^2,
\]
which is non-negative if and only if $(6 - (3+\alpha)p)^2 \ge 8 (2-p)^2$.  But since $6 - (3 + \alpha)p > 0$ and $2-p > 0$, we obtain
\[
6 - (3 + \alpha)p \ge 2 \sqrt{2} (2-p)
\]
and, rearranging gives the upper bound for $p$. The lower bound comes from Lemma \ref{lem:pLargeAlpha}. 

The conclusion that $t$ is the smaller root of $f$ comes from observing that, if there are two roots, then the other root, $r$, must satisfy $h(r)>0$, i.e.\ $4r\exp(\alpha(r-1)) <((2-p)r+p)^2$. Putting this into the expression for the derivative of $h$ we get 
\begin{align*}
h'(r) & >2((2-p)^2r+p(2-p))-\frac{1+\alpha r}{r}\cdot ((2-p)r+p)^2 \\
&\qquad  = -\frac{((2-p)r + p)}{r} \left[ \alpha (2-p)r^2 + (\alpha p+p-2)r+p \right]  = -\frac{((2-p)r + p)}{r} f(r).
\end{align*}
But $r$ is a root of $f$ and so the right-hand side is 0.  Hence, $h'(r) > 0$.  But by the previous lemma this is possible only if $r>t= t_1$.
\end{proof}

This completes the proof of Theorem~\ref{thm:GeoGWcritical}.

\section{The limiting model and the parking problem on it}\label{sec:limitingModel}

\subsection{The limiting model}\label{subsec: lmaal}

We wish to consider the \emph{local weak limit} of $T_n$ (following, for example, Aldous and Steele~\cite{AldousSteele}), which should intuitively be obtained by conditioning a \GGW($\frac{1}{2}$) tree on non-extinction.  It follows from Lemma 1.14 of Kesten~\cite{Kesten} that one can make sense of this (\emph{a priori} singular) conditioning for any mean-1 offspring distribution $\mu$.  In particular, the limit object may be constructed as follows.  Vertices may have one of two types: they are either special or non-special. The root is special. 
\begin{itemize}
\item[(1)] Special individuals reproduce according to the size-biased distribution $\mu^*(k) := k\mu(k)$, $k \geq 1$; non-special individuals reproduce according to the normal offspring distribution $\mu$.  All individuals reproduce independently.
\item[(2)] Among the offspring of a special individual, one child is chosen uniformly at random and declared to be special. The remaining children are non-special, and have independent numbers of children, each with distribution $\mu$.
\item[(3)] The offspring of a non-special individual are all non-special.
\end{itemize}

Since we do not care about the planar ordering of the different offspring, and the size-biased distribution associated with $\Geom(\frac{1}{2})$ is that of $1 + G_1 + G_2$ where $G_1$ and $G_2$ are independent  $\Geom(\frac{1}{2})$ random variables, we obtain the following simplification, which is our limiting tree model, $T$.
Start with an infinite directed path $\Pi_\infty$ on $\NN$, with edges directed from $k+1$ to $k$ for all $k \geq 1$. Then, for every $k$, graft two independent \GGW($\frac{1}{2}$) trees onto $k$ (by identifying their roots with $k$) with all edges directed towards $k$. Finally, root the resulting (infinite) tree at 1.  We call this model $\GGW_{\infty}(\frac{1}{2})$.

On the tree of size $n$, we had $\fl{\alpha n}$ cars arriving at i.i.d.\ uniform parking spots.  So, in particular, the joint distribution of the numbers of cars arriving at the different vertices is Multinomial($\lfloor \alpha n \rfloor$; $\frac{1}{n},\ldots,\frac{1}{n})$.  It is not hard to see that in the limit (at least as long as we restrict our attention to a fixed subset of the vertices) we get an independent $\Po(\alpha)$ number of cars at each vertex of the limiting tree model. 
 
In summary, we claim that the local weak limit of our model is given by a $\GGW_{\infty}(\frac{1}{2})$ tree $T$ with i.i.d.\ $\Po(\alpha)$ arrivals of cars at each vertex. Since, in fact, our only interest is in showing that the probability all cars can park on $T_n$ converges to the probability that all cars can park on $T$, we will not go into the topological details required to properly set up the convergence of the tree and cars.  We will prove the convergence of the probabilities in Section~\ref{sec:finishingProof} below.

\subsection{Parking on the limit model}\label{subsec:piinfty}

In Section \ref{sec:ggw}, we studied the random number $X$ of cars that arrive at the root of a single $\GGW(\frac{1}{2})$ tree. Based on this knowledge of $X$, we will re-examine the parking process on our limiting model. We find it convenient to do this in two steps: first let the cars which arrive within the geometric Galton--Watson subtrees hanging from $\Pi_{\infty}$ park within those subtrees if they can, or stop at the root of their subtree if they cannot.   Let $Y_i$ denote the numbers of cars wanting to park at $i \in \NN$. Then in the second step, consider the parking problem restricted to the infinite directed path $\Pi_{\infty}$ with arrivals of cars now given by $Y_i, i \in \NN$.

The random variables $Y_1, Y_2, \ldots$ are clearly i.i.d.  Since each vertex of $\Pi_{\infty}$ has a number of neighbours \emph{off} the path equal to the sum of two independent $\Geom(\frac{1}{2})$ random variables, we get that $Y_1, Y_2, \ldots$ have common distribution satisfying
\begin{equation} \label{eqn:yandx}
Y \equidist P + \sum_{i=1}^{N_1+N_2} (X_i -1)^+,
\end{equation}
where $P \sim \Po(\alpha)$, $N_1, N_2 \sim \Geom(\frac{1}{2})$, $X_1, X_2, \ldots$ are i.i.d.\ copies of $X$, and all of the random variables on the right-hand side are independent. 

As shown in Goldschmidt and Przykucki~\cite{Goldschmidt and Przykucki}, the cars can all park on $\Pi_\infty$ if and only if we have
\[
C_n = n - \sum_{k=1}^n Y_k \geq 0 \quad \text{ for all $n \in \NN$.}
\]
(One can think of $C_n$ as the ``spare parking capacity'' in the first $n$ vertices along the spine -- if this ever becomes negative then some car cannot park.)  The next result is the key theorem in this section.

\begin{thm}
\label{thm:infiniteParkingProb}
Let $T$ denote a $\GGWinf(\frac{1}{2})$ tree with all edges directed towards the root. Now initially place an independent $\Po(\alpha)$ number of cars at each vertex of the tree. Following our conventional parking rules, let $A_{\alpha}$ be the event that all cars can park on $T$. Then
\[
\bP( A_\alpha) = \begin{cases}
\frac{\sqrt{1-2\alpha-\alpha^2}}{(1-\alpha)^2\exp(\alpha)} & \text{ if $0 \le \alpha \le \sqrt{2}-1$,} \\
0 & \text{ if $\alpha > \sqrt{2}-1$.}
\end{cases}
\]
\end{thm}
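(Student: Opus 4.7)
The plan is to exploit the two-step decomposition described just before the statement: after the first step, $A_\alpha$ coincides with the event that the random walk $S_n := \sum_{k=1}^n (Y_k-1)$ satisfies $S_n \leq 0$ for all $n \geq 1$, where $Y_1, Y_2, \ldots$ are i.i.d.\ with distribution \eqref{eqn:yandx}. Since $Y - 1 \geq -1$, this walk is skip-free downward, and the target identity can be rewritten as
\[
\bP(A_\alpha) = \frac{1 - \E{Y}}{\bP(Y = 0)}
\]
whenever $\E{Y} \leq 1$, so the proof splits into (i) computing $\E{Y}$ and $\bP(Y = 0)$ explicitly from Theorem~\ref{thm:GeoGWcritical}, and (ii) establishing the above identity for the walk. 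The supercritical and critical regimes of $Y$ are handled by separate, easier arguments.

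For the two moment/pgf computations I would proceed as follows. Taking expectations in \eqref{eqn:yandx} via Wald gives $\E{Y} = \alpha + 2 \E{(X-1)^+} = \alpha + 2(\E{X} - \bP(X \geq 1))$. Plugging in the values from Theorem~\ref{thm:GeoGWcritical} yields $\E{Y} = 1 - \sqrt{1 - 2\alpha - \alpha^2}$ for $\alpha \in (0, \sqrt{2}-1]$ and $\E{Y} = \infty$ for $\alpha > \sqrt{2}-1$. For $\bP(Y = 0)$ I would use the probability generating function: conditioning on $N_1 + N_2$ in \eqref{eqn:yandx} gives $\phi_Y(s) := \E{s^Y} = e^{\alpha(s-1)}/(2 - \E{s^{(X-1)^+}})^2$. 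Writing $\E{s^{(X-1)^+}} = p + (G(s) - p)/s$ and combining with the quadratic \eqref{eqn:quadratic} satisfied by $G$ produces the clean simplification $2 - \E{s^{(X-1)^+}} = e^{\alpha(s-1)}/G(s)$, whence $\phi_Y(s) = G(s)^2 e^{\alpha(1-s)}$. Evaluating at $s = 0$ gives $\bP(Y = 0) = p^2 e^\alpha$, equal to $(1-\alpha)^2 e^\alpha$ in the subcritical range.

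The main obstacle is establishing the identity $\bP(A_\alpha) = (1 - \E{Y})/\bP(Y = 0)$ when $\E{Y} < 1$. I would set $q^{(m)} := \bP(S_n \leq m \text{ for all } n \geq 0)$ and perform a one-step analysis. Conditioning on $Y_1 = k$ forces $k \leq m + 1$ and then restarts a fresh walk that must stay below $m - k + 1$, giving
\[
q^{(m)} = \sum_{k=0}^{m+1} \bP(Y = k)\, q^{(m - k + 1)}, \qquad m \geq 0,
\]
with $q^{(j)} = 0$ for $j < 0$. Introducing $Q(z) := \sum_{m \geq 0} q^{(m)} z^m$ and summing the recursion yields the closed form $Q(z) = \bP(Y = 0)\, q^{(0)}/(\phi_Y(z) - z)$. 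Since $\E{Y} < 1$ forces $S_n \to -\infty$ almost surely, $q^{(m)} \uparrow 1$, so an Abelian argument gives $(1 - z) Q(z) \to 1$ as $z \uparrow 1$, while $\phi_Y(z) - z \sim (1 - \E{Y})(1 - z)$ at the same limit. Matching the two sides identifies $q^{(0)} = \bP(A_\alpha)$ with $(1 - \E{Y})/\bP(Y = 0)$, which after substitution is $\sqrt{1-2\alpha-\alpha^2}/[(1-\alpha)^2 e^\alpha]$.

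For the remaining cases, when $\alpha > \sqrt{2}-1$ we have $\E{Y} = \infty$, so the strong law applied to the nonnegative $Y_k$ gives $n^{-1}\sum_{k=1}^n Y_k \to \infty$ almost surely and hence $C_n < 0$ eventually, forcing $\bP(A_\alpha) = 0$. At the critical value $\alpha = \sqrt{2}-1$ we have $\E{Y} = 1$ with $Y$ not identically $1$ and $\E{|Y-1|} < \infty$, so the Chung--Fuchs recurrence theorem yields $\limsup_n S_n = +\infty$ almost surely and hence $\bP(A_\alpha) = 0$; this agrees with the stated formula, whose numerator vanishes at $\alpha = \sqrt{2}-1$.
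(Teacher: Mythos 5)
Your proposal is correct and follows essentially the same route as the paper: reduce to the spine walk with i.i.d.\ increments $1-Y_k$, compute $\E{Y}$ and $\Prob{Y=0}$ from Theorem~\ref{thm:GeoGWcritical}, and apply the skip-free hitting formula $\Prob{A_\alpha} = (1-\E{Y})/\Prob{Y=0}$, with the supercritical case killed by the strong law. The differences are minor and all sound: where the paper simply cites the skip-free identity from Brown--Pek\"oz--Ross (Lemma~\ref{lem:skipfree}), you prove it yourself via the one-step recursion for $q^{(m)} = \Prob{\sup_n S_n \le m}$ and an Abelian limit, which makes the argument self-contained; you obtain $\Prob{Y=0} = p^2 e^{\alpha}$ from the generating-function identity $\E{s^Y} = G(s)^2 e^{\alpha(1-s)}$ rather than from the paper's direct factorisation $\Prob{Y=0}=\Prob{P=0}\Prob{\sum_{i=1}^N(X_i-1)^+=0}^2$; and you handle the boundary case $\alpha = \sqrt{2}-1$ by a separate oscillation argument for the mean-zero walk, whereas the cited lemma covers $m=0$ directly (both give the answer $0$, consistent with the vanishing numerator). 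Each of these substitutions checks out, so the proposal is a complete and valid proof.
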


In particular, we may reformulate
\[
\Prob{A_{\alpha}} = \Prob{C_n \geq 0 \text{ for all } n \geq 1}.
\]
\begin{proof}
The process $(C_n)_{n \ge 1}$ is a random walk with initial state $C_0 = 0$ and step-size $1-Y_n$.  If $\E{Y}>1$, then by the strong law of large numbers, 
\[
\Prob{C_n \geq 0 \text{ for all } n \geq 1} = 0.
\]
Now by \eqref{eqn:yandx},
\begin{equation} \label{eqn:EY}
\E{Y} = \alpha + 2\E{X}-2(1-p)  = \begin{cases}
    1-\sqrt{1-2\alpha-\alpha^2}\ &\mbox{if $\alpha \le \sqrt{2}-1$}\\
    \infty\  &\mbox{if $\alpha > \sqrt{2}-1$,}
    \end{cases}
\end{equation}
where the second line follows from Theorem~\ref{thm:GeoGWcritical}.  (Note that $\E{Y} = 1$ if $\alpha = \sqrt{2} - 1$.)  Hence, when $\alpha > \sqrt{2}-1$, clearly $\Prob{A_{\alpha}} = 0$. 

For $\alpha \le \sqrt{2} -1$, we apply the following lemma, whose proof may be found, for example, in \cite{BrownPekozRoss-skipFreeWalks}.

\begin{lemma} \label{lem:skipfree}
Let $(S_n)_{n \ge 1}$ be a random walk with i.i.d.\ step sizes $W_1, W_2, \ldots$ such that $\Prob{W_1\leq 1} = 1$, $\E{W_1}=m \geq 0$, $\Prob{W_1 = 1} = q>0$. Then
$$\Prob{S_n \geq 0 \text{ for all $n \in \NN$}} = \frac{m}{q}.$$
\end{lemma}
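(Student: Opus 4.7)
The plan is to compute the expected number of visits of $(S_n)$ to $0$, namely $N_0 := \sum_{n \ge 0} \I{S_n = 0}$, in two different ways and equate them; here I take the standard convention $S_0 = 0$, so the event of interest is $\{S_n \ge 0 \text{ for all } n \ge 0\}$, and I write $u$ for its probability.  The case $m = 0$ can be dispatched at once: since $\Prob{W_1=1}=q>0$, the step $W_1$ is not identically zero, so the walk is a non-degenerate mean-zero integer random walk (integer-valuedness being inherited from the paper's application, in which $W_n = 1 - Y_n$), hence recurrent, and so $u = 0 = m/q$.  For the rest I assume $m > 0$, so that $S_n \to +\infty$ almost surely and the walk is transient.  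The crucial structural fact is the skip-free-above property: since $W_i \le 1$ is integer-valued, the first-hitting times $\tau_k := \inf\{n \ge 0 : S_n = k\}$ are almost surely finite with $S_{\tau_k} = k$ exactly, and Wald's identity applied to $\tau_1$ (whose mean is easily seen to be finite by combining $S_n/n \to m$ with the i.i.d.\ structure of $(\tau_k - \tau_{k-1})_{k \ge 1}$) gives $\E{\tau_1} = 1/m$.

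First computation (renewal argument).  By the strong Markov property, $N_0$ is geometric: if $\rho := \Prob{S_n = 0 \text{ for some } n \ge 1}$ is the return probability to $0$, then $\E{N_0} = 1/(1-\rho)$.  I then condition on $W_1$.  With probability $q$ we have $S_1 = 1$, from which the walk returns to $0$ with probability $1-u$ by translation.  With probability $1-q$ we have $W_1 \le 0$, so $S_1 \le 0$; from any non-positive level the walk returns to $0$ almost surely, because by skip-free-above and positive drift it must pass through every intermediate integer on its way up and each such passage has probability $1$.  Hence $\rho = q(1-u) + (1-q) = 1 - qu$ and $\E{N_0} = 1/(qu)$.

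Second computation (cycle decomposition).  By strong Markov at $0 = \tau_0 < \tau_1 < \tau_2 < \cdots$, the walk decomposes into independent cycles $\mathcal{C}_k := [\tau_{k-1}, \tau_k - 1]$, each of which, after translation by $-(k-1)$, has the distribution of $\mathcal{C}_1 = [0, \tau_1 - 1]$.  In particular, the number of visits to level $0$ during $\mathcal{C}_k$ has the same distribution as the number of visits to level $-(k-1)$ during $\mathcal{C}_1$.  Summing over $k \ge 1$, taking expectations, and interchanging sum and expectation (valid by non-negativity),
\[
\E{N_0} = \sum_{j \ge 0} \E{\#\{n \in [0, \tau_1 - 1] : S_n = -j\}} = \E{\tau_1} = \frac{1}{m},
\]
where the middle step uses that $\mathcal{C}_1$ has exactly $\tau_1$ time points and, by skip-free-above, at each of them the walk occupies some level in $\{0, -1, -2, \ldots\}$.

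Equating the two expressions for $\E{N_0}$ gives $1/(qu) = 1/m$, hence $u = m/q$ as required.  The main subtlety lies in the cycle decomposition: one must verify, via strong Markov at each $\tau_{k-1}$ together with translation invariance of the step distribution, that the cycles really are i.i.d.\ up to an additive shift, and that the bookkeeping correctly identifies visits to $0$ in $\mathcal{C}_k$ with visits to $-(k-1)$ in $\mathcal{C}_1$.  The remaining ingredients -- Wald's identity, the geometric representation of $N_0$, and the fact that from any non-positive level the walk returns to $0$ almost surely -- are routine.
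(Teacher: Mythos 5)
The paper does not actually prove Lemma~\ref{lem:skipfree}: it simply cites Brown, Pek\"oz and Ross \cite{BrownPekozRoss-skipFreeWalks} for it. Your argument is therefore a self-contained substitute rather than a variant of the paper's proof, and it is correct. The double counting of $\E{N_0}$ is a clean way to get the identity: the renewal computation gives $\E{N_0}=1/(qu)$ and the cycle decomposition gives $\E{N_0}=\E{\tau_1}=1/m$, and both halves are sound. Two small points are worth making explicit. First, the lemma as stated is false without the (unstated) hypothesis that $W_1$ is integer-valued -- e.g.\ $W_1\in\{1/2,1\}$ gives $u=1<m/q$ -- and you are right to flag that this hypothesis is inherited from the application, where $W_n=1-Y_n$ with $Y_n\in\{0,1,2,\dots\}$; skip-freeness above is what makes $S_{\tau_k}=k$ exactly and makes every time in $[0,\tau_1-1]$ sit at a level in $\{0,-1,-2,\dots\}$, both of which your argument uses essentially. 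Second, in the step $\rho=q(1-u)+(1-q)$ the case $W_1=1$ deserves the same justification you give for $W_1\le 0$: ``the walk from level $1$ hits $0$'' is equivalent to ``the walk from level $1$ ever reaches a level $\le 0$'' only because a walk that overshoots $0$ downwards must, by skip-freeness above and transience to $+\infty$, later pass through $0$ exactly; without this, hitting $0$ would only be a sub-event of going below $1$, and you would get an inequality rather than $1-u$. You clearly have this observation in hand, so the gap is purely expository. The $m=0$ case is also fine: since $q>0$ and $\E{W_1}=0$ force $\Prob{W_1<0}>0$, the walk is irreducible on $\Z$ and recurrent, hence visits $-1$ almost surely.
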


By \eqref{eqn:rde}, we have
$$
\Prob{X = 0} =\Prob{P = 0}\Prob{\sum_{i=1}^N (X_i -1)^+=0},
$$
where clearly $\Prob{P = 0} = \exp(-\alpha)$. Since $\alpha \leq \sqrt{2}-1$, we have $\Prob{X = 0} = p = 1-\alpha$, and so $$\Prob{\sum_{i=1}^N (X_i -1)^+=0}=(1-\alpha)\exp(\alpha).$$
But then $$\Prob{Y = 0} =\Prob{P = 0}\Prob{\sum_{i=1}^N (X_i -1)^+=0}^2 = (1-\alpha)^2\exp(\alpha)$$ and applying Lemma~\ref{lem:skipfree}, we obtain the claimed result.
\end{proof}

\subsection{Completing the proof of Theorem~\ref{thm:keyproblem}.}\label{sec:finishingProof}

Conditionally on the existence of a monotone coupling of the trees $T_n$ as $n$ varies, the derivation of Theorem~\ref{thm:keyproblem} from Theorem~\ref{thm:infiniteParkingProb} is now identical to the proof of Theorem 1.1 in Goldschmidt and Przykucki~\cite{Goldschmidt and Przykucki}, the analogous result for uniform random (unordered) tree.  The existence of such a coupling (also known as a ``building scheme'') follows from results of Luczak and Winkler~\cite{LuczakWinkler}; see, in particular, the discussion on p.427 of their paper.  We will give a short proof for the sake of completeness.

\begin{figure}
\includegraphics[width=4.5cm,page=1]{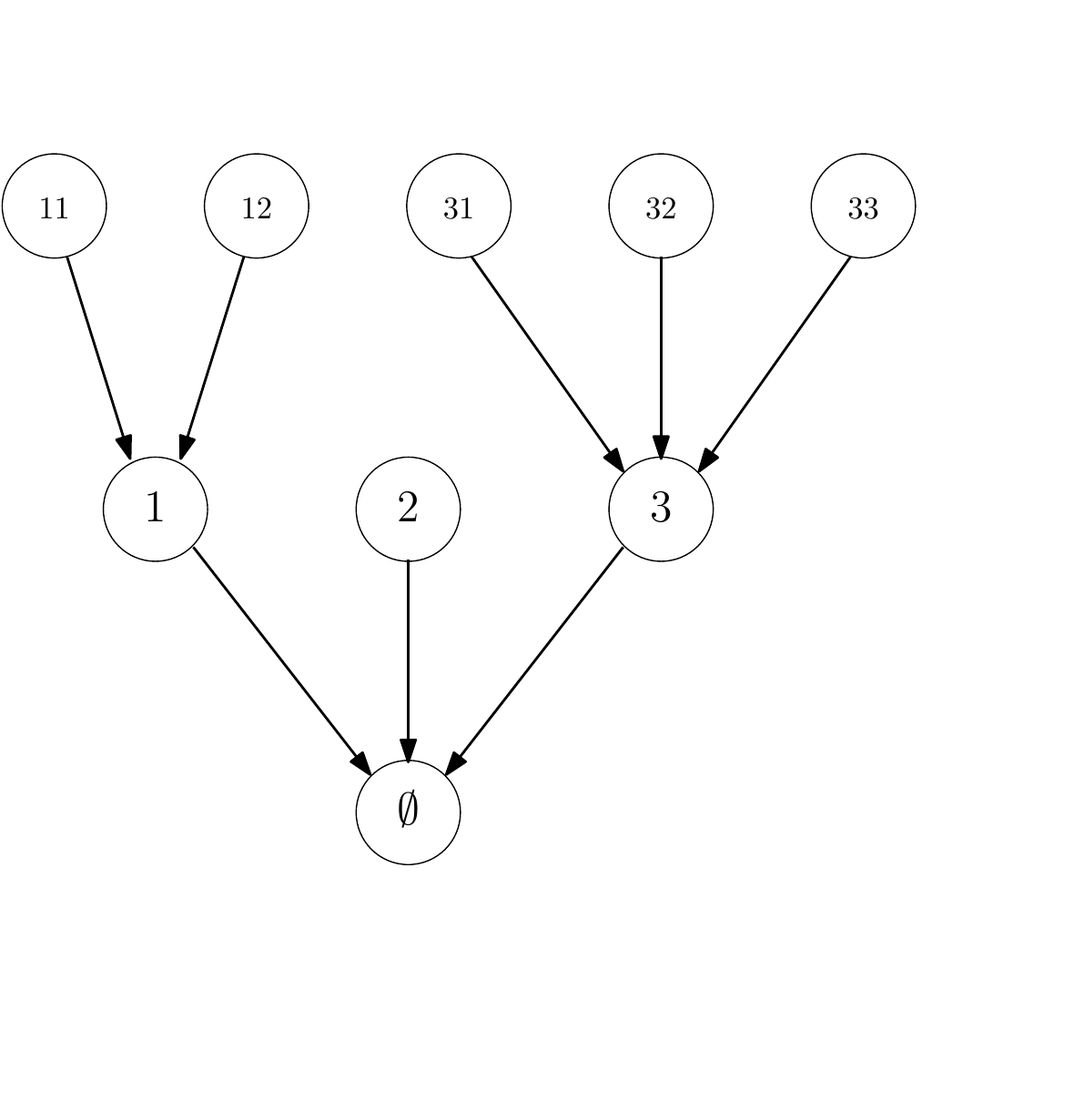} 
\includegraphics[width=4.5cm,page=2]{RotationCorrespondence.pdf} 
\includegraphics[width=4.5cm,page=3]{RotationCorrespondence.pdf} 
\caption{The rotation correspondence, $\Phi$, maps the plane tree on the left to the binary tree on the right.}
\label{fig:rotation}
\end{figure}

\begin{prop}[Luczak and Winkler~\cite{LuczakWinkler}] \label{prop:building}
There exists a coupling of the trees $(T_n)_{n \ge 1}$ which is such that $T_1$ consists of the root $\emptyset$ and, for each $n \ge 1$, $T_{n+1}$ is obtained from $T_n$ by the addition of a leaf.
\end{prop}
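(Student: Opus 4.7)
The plan is to construct the coupling by transferring a growth procedure for uniform random plane binary trees to plane trees via the rotation correspondence $\Phi$ illustrated in Figure~\ref{fig:rotation}. Recall that $\Phi$ is a bijection from $\mathcal{T}_{n+1}$ onto the set $\mathcal{B}_n$ of plane binary trees with $n$ internal nodes (equivalently $n+1$ external leaves); since both sets are enumerated by $C_n$, $\Phi$ maps the uniform distribution on $\mathcal{T}_{n+1}$ to the uniform distribution on $\mathcal{B}_n$. Constructing the desired leaf-addition coupling of $(T_n)_{n \ge 1}$ therefore reduces to constructing a consistent sequence $(B_n)_{n \ge 0}$ of uniform binary trees under the binary-tree operation that corresponds, under $\Phi^{-1}$, to leaf addition on the plane-tree side.

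The strategy I would follow is to invoke R\'emy's classical algorithm: starting from $B_0$ (the single external leaf), at each step pick uniformly one of the $2n-1$ nodes of $B_{n-1}$ together with a uniformly chosen side in $\{L,R\}$, and insert a new internal node $w$ at the chosen position with the selected node and a new external leaf as its two children, arranged according to the side choice. R\'emy's theorem asserts that the resulting $B_n$ is uniform on $\mathcal{B}_n$. Setting $T_{n+1} := \Phi^{-1}(B_n)$, with $T_1 = \{\emptyset\}$ corresponding to the trivial $B_0$, produces a coupled sequence of uniform random plane trees.

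What remains is to verify that each transition $T_n \to T_{n+1}$ is really a leaf addition. Adopting the convention that in $\Phi$ the left child of an internal binary-tree node encodes the first child of the corresponding plane-tree vertex and the right child encodes its next sibling, one traces through R\'emy's step by a short case analysis—distinguishing whether the selected node is internal or external, and whether the new external leaf sits on the left or right side of $w$—to identify the new plane-tree vertex and its location among the siblings of some existing vertex. A useful sanity check is that the number of candidate (parent, side) slots in $B_{n-1}$ equals $2(n-1)+1=2n-1$, matching the number $\sum_{u \in T_n}(k(u)+1)=2n-1$ of leaf-insertion positions in $T_n$. The main obstacle is to confirm that the new vertex is in every case inserted as a \emph{leaf} of $T_{n+1}$ rather than as an internal vertex that regroups existing children; where a given R\'emy variant does not directly correspond to a leaf addition, one restricts or re-weights the binary-tree operation accordingly, appealing to the building-scheme machinery of Luczak and Winkler~\cite{LuczakWinkler} (a Hall-type argument on the bipartite graph of admissible transitions between $\mathcal{B}_{n-1}$ and $\mathcal{B}_n$) to ensure that the uniform marginal on $(B_n)$ is preserved.
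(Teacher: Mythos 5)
Your high-level plan (push a binary-tree growth procedure through the rotation correspondence $\Phi$) is the same as the paper's, but the engine you choose, R\'emy's algorithm, does not perform the step you need, and this is a genuine gap rather than a deferred verification. Trace a R\'emy insertion through the bijections: view the full binary tree $B_{n-1}$ as the completion of an incomplete binary tree $\texttt{t}\in\mathscr{T}^2_{n-1}$ (an external node at every missing child), with $\texttt{t}=\Phi(\mathrm{t})$ for a plane tree $\mathrm{t}$ on $n$ vertices. If the selected node $v$ is external, the new internal node $w$ has two external children, so it is a leaf of the new incomplete binary tree and the move is a genuine leaf addition in $\mathrm{t}$. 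If $v$ is internal and the new external child of $w$ is placed on the \emph{left}, then $w$ has only a right child in the incomplete tree, hence no first child and a next sibling in $\mathrm{t}$: again a leaf addition (as a non-rightmost sibling). But if $v$ is internal and the new external is placed on the \emph{right}, then $w$ acquires $v$ as its left child, and under $\Phi^{-1}$ the new vertex $w$ becomes an \emph{internal} vertex of the plane tree that adopts $v$ and all of $v$'s former right-siblings as its children. These bad moves have probability tending to $1/4$, so your ``short case analysis'' does not come out in your favour. Your sanity check is also misleading: R\'emy offers $2(2n-1)$ (node, side) choices, not $2n-1$, and only $2n-1$ distinct outcomes among them are leaf additions.

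The proposed repair --- restrict or re-weight the moves and ``appeal to the building-scheme machinery of Luczak and Winkler'' --- does not close the gap; it \emph{is} the gap. Simply conditioning R\'emy on the good moves destroys uniformity: under any rule that selects a leaf-insertion slot of $\mathrm{t}$ with probabilities depending only on local data, the number of ways to reach a given target tree equals its number of leaves, which varies across $\mathcal{T}_{n+1}$, whereas uniformity forces every target to be reached with equal total weight. Producing a (necessarily non-uniform, non-local) leaf-adding transition kernel between consecutive uniform distributions is precisely the content of Section 3 of Luczak and Winkler, proved there by a Hall/max-flow argument; it is not a variant of R\'emy's algorithm. The paper therefore invokes that result directly for the incomplete binary trees and only has to check the genuinely easy fact that a leaf added to $\texttt{t}$ (having neither a left nor a right child) corresponds under $\Phi^{-1}$ to a leaf of $\mathrm{t}$ adjoined as a rightmost child. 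As written, your argument either fails (pure R\'emy) or reduces to citing the very theorem it was meant to derive.
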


\begin{proof}
We start by introducing another model of trees.  We call a \emph{binary tree} a subset $\texttt{t} \in \cup_{n \ge 0} \{0,1\}^n$ (where, by convention, $\{0,1\}^0  = \emptyset$) satisfying the following conditions:
\begin{itemize}
\item $\emptyset \in \texttt{t}$;
\item if $\mathbf{u} \in \texttt{t}$ then $p(\mathbf{u}) \in \texttt{t}$.
\end{itemize}
Notice that each vertex $\mathbf{u}$ of the tree has at most two children: a \emph{left} child $\mathbf{u}0$, a \emph{right} child $\mathbf{u} 1$ or both. Let $\mathscr{T}^2$ be the set of binary trees, and let $\mathscr{T}_n^2$ be the subset of binary trees of size $n$.  Then $\mathscr{T}_n^{2}$ is enumerated by the Catalan number $C_n$.  There is a well-known bijection $\Phi: \mathcal{T}_n \to \mathscr{T}_{n-1}^{2}$, sometimes called the \emph{rotation correspondence}, which works as follows.  Let $\mathrm{t} \in \mathcal{T}_n$.  For each vertex $\mathbf{u} \in \mathrm{t}$, if $k(\mathbf{u}) \ge 1$ then keep the edge from $\mathbf{u}1$ to $\mathbf{u}$ and delete the edges from $\mathbf{u}2, \ldots, \mathbf{u}k(\mathbf{u})$ to $\mathbf{u}$.  Draw new edges $\mathbf{u}(i+1)$ to $\mathbf{u}i$ for $1 \le i \le k(\mathbf{u}) - 1$.  Now remove the root $\emptyset$ and the edge to its (now unique) child.  In the resulting object, treat edges of the original tree as edges from left children, and edges which we have added between siblings as from right children, and relabel accordingly, to give an element of $\mathscr{T}^2_{n-1}$.  See Figure~\ref{fig:rotation} for an example.

In particular, if $\texttt{T}_{n-1}$ is a uniformly random element of $\mathscr{T}^2_{n-1}$ then $\Phi^{-1}(\texttt{T}_{n-1})$ has the same distribution as $T_n$, a uniformly random element of $\mathcal{T}_n$.

Section 3 of \cite{LuczakWinkler} is devoted to showing that there exists a building scheme for $(\texttt{T}_n)_{n \ge 1}$ which works by always adding a leaf.  It is straightforward to see that addition of a leaf in $\texttt{t}$ corresponds to adding a new leaf (indeed, a leaf which is a rightmost child) in $\mathrm{t}$.  So if $(\texttt{T}_n)_{n \ge 1}$ is constructed according to the building scheme for binary trees then $(\Phi^{-1}(\texttt{T}_{n-1}))_{n \ge 2}$ is a building scheme for uniform random plane trees.
\end{proof}

This concludes the proof of Theorem~\ref{thm:keyproblem}.

\section{Our results in context}\label{sec:context}

Consider the following more general version of our model.  Take a Galton--Watson tree with offspring distribution the law of some random variable $N$, and assume that i.i.d.\ numbers of cars initially arrive at each of the vertices of the tree, with common distribution that of some random variable $P$.  Let us restrict attention to the situation where the offspring distribution is \emph{critical}, i.e.\ has mean 1, and \emph{non-degenerate} in the sense that $\Prob{N = 1} < 1$ .  (See \cite{Goldschmidt and Przykucki} for a discussion of the sub- and super-critical cases.)  As usual, let $X$ be the number of cars arriving at the root, which solves the analogue of the RDE (\ref{eqn:rde}). We are aware of three settings for which the distribution of $X$ has now been fully analysed: $N \sim \text{Po}(1)$, $P \sim \text{Po}(\alpha)$ in \cite{Goldschmidt and Przykucki}; $N \sim \Geom(\frac{1}{2})$, $P \sim \text{Po}(\alpha)$ in the present paper; and
$$
\Prob{N = 0} = \beta, \quad \Prob{N = 1} = 1-2\beta, \quad \Prob{N=2} = \beta,
$$
where $\beta \in (0,1/4]$ and 
$$
\Prob{P = 0} = 1 - \alpha/2, \quad \Prob{P = 2} = \alpha/2,
$$
where $\alpha \in (0,2]$, in \cite{Jones}.  In each case, we have a critical offspring distribution with finite variance and a stochastically monotone family of arrival distributions parameterised by $\alpha$; and in each case we observe a discontinuous phase transition for $\E{X}$, which jumps from a finite value to $\infty$ as $\alpha$ passes through some critical value $\alpha_c$.  In each case, we also observe the branch-switching phenomenon for the generating function of $X$ for $\alpha > \alpha_c$, as described in Section~\ref{subsec:large}.  In particular, the behaviour of the random variable $X$ seems to be, at least to some extent, universal. The following conjecture was made in \cite{Goldschmidt and Przykucki}.

\begin{conj}[Goldschmidt and Przykucki~\cite{Goldschmidt and Przykucki}]
Suppose that $\var{N} \le 1$, that $P$ is stochastically increasing in $\alpha = \E{P}$, and that $\var{P} < \infty$ for all $\alpha \ge 0$. Let $\nu(\alpha) = \E{P(P-1)}$ and define
\[
\alpha_c = \inf \left\{\alpha \ge 0: \alpha = 1 - \sqrt{\var{N} \nu(\alpha)}\right\}.
\]
Then
\[
\E{X} = 
\begin{cases}
\frac{1 - \alpha + \alpha \var{N} - \sqrt{(1-\alpha)^2 - \var{N} \nu(\alpha)}}{\var{N}} & \text{ if $\alpha \le \alpha_c$}, \\
\infty & \text{ if $\alpha > \alpha_c$}.
\end{cases}
\]
\end{conj}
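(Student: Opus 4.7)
The plan is to generalize the generating-function analysis of Section~\ref{sec:ggw} to arbitrary offspring and arrival distributions. Writing $f(s) = \E{s^N}$ and $g(s) = \E{s^P}$, the natural analogue of the RDE \eqref{eqn:rde}, combined with the identity $\E{s^{(X-1)^+}} = p + (G(s)-p)/s$, yields the functional equation
\[
G(s) = g(s)\, f(z(s)), \qquad z(s) := p + \frac{G(s)-p}{s}.
\]
Under the assumption $\E{X} < \infty$, taking expectations in the RDE forces $p = 1-\alpha$ exactly as in Proposition~\ref{prop:p0}. I would then differentiate the functional equation twice at $s=1$, using $g'(1)=\alpha$, $f'(1)=1$, $g''(1)=\nu(\alpha)$ and $f''(1)=\var{N}$. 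Cross-checking against the identity $G(s) = sz(s) - p(s-1)$ (which also gives $G''(1) = 2z'(1) + z''(1)$), the $z''(1)$ terms cancel, leaving the quadratic
\[
\var{N}\, y^2 - 2(1-\alpha)\, y + \nu(\alpha) = 0
\]
in $y := z'(1) = \E{X} - \alpha$. The smaller root is selected by stochastic monotonicity of $X$ in $\alpha$ together with the boundary condition $\E{X} \to 0$ as $\alpha \to 0$, and a routine rearrangement produces the formula in the conjecture.

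The phase transition then reads off from the discriminant $(1-\alpha)^2 - \var{N}\,\nu(\alpha)$: it equals $1$ at $\alpha=0$, and as $\alpha$ grows, $(1-\alpha)^2$ decreases while $\nu(\alpha)$ increases (since $P$ is stochastically increasing in $\alpha$), so the discriminant must eventually cross zero; by definition this first crossing occurs at $\alpha_c$. For $\alpha > \alpha_c$ the quadratic has no real solution, which contradicts $\E{X} < \infty$, so $\E{X} = \infty$ in that regime. The mechanism is identical to the one exhibited in the three worked cases, and one expects the same branch-switching phenomenon for $G$ at $\alpha_c$ as described in Section~\ref{subsec:large}.

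The hard part will be establishing rigorously that $\E{X} < \infty$ for $\alpha < \alpha_c$. In the three previously treated cases this ultimately rests on an explicit closed form of $G$, available because $f$ is algebraic; in the abstract setting $G$ is typically transcendental and a different approach is needed. The natural route is a fixed-point argument: define truncations $X^{(k)}$ counting cars that reach the root within $k$ generations of the Galton--Watson tree, derive the recursion $G^{(k+1)}(s) = g(s)f(z^{(k)}(s))$, and study the induced scalar iteration $m_{k+1} = \alpha + m_k - \Prob{X^{(k)} \ge 1}$ on $m_k := \E{X^{(k)}}$, showing it converges (to the smaller root of the quadratic) precisely when $\alpha < \alpha_c$. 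A separate technical point is justifying that the second-order expansion at $s = 1$ used above is meaningful when only $\E{X}<\infty$ is assumed rather than $\E{X^2}<\infty$; this is where I expect the genuine subtlety to lie, and it will likely require an Abelian-type argument controlling the boundary behaviour of $G''(s)$ as $s \uparrow 1$. The role of the hypothesis $\var{N} \le 1$ is not yet visible in the formal derivation and is presumably tied to ensuring that this fixed-point argument actually converges in the non-algebraic regime.
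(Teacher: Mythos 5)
First, a point of order: the statement you were asked to prove is presented in the paper as a \emph{conjecture} (attributed to Goldschmidt and Przykucki), and the paper contains no proof of it. It only verifies the conclusion in the special case $N \sim \Geom(\frac{1}{2})$, $P \sim \Po(\alpha)$, via the explicit algebraic formula for $G$ obtained in Theorem~\ref{thm:GeoGWcritical} (and it remarks that the hypothesis $\var{N}\le 1$ is evidently not essential, since $\var{N}=2$ there). So there is no proof in the paper to compare yours against. Judged on its own terms, your formal computation is correct and is precisely the calculation that motivates the conjectured formula: the functional equation $G(s)=g(s)f(z(s))$ with $z(s)=p+(G(s)-p)/s$, the identity $p=1-\alpha$ when $\E{X}<\infty$, and the second-order expansion at $s=1$ (using $f'(1)=1$, $f''(1)=\var{N}$, $g'(1)=\alpha$, $g''(1)=\nu(\alpha)$) do yield the quadratic $\var{N}\,y^2-2(1-\alpha)y+\nu(\alpha)=0$ in $y=\E{X}-\alpha$, whose smaller root gives the stated expression and whose discriminant vanishes exactly when $\alpha=1-\sqrt{\var{N}\nu(\alpha)}$.

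However, what you have is a derivation conditional on regularity that you have not established, and the gaps you flag at the end are the entire content of the problem. Concretely: (i) the cancellation of the $z''$ terms holds only \emph{at} $s=1$; for $s<1$ the coefficient of $z''(s)$ coming from $G(s)=sz(s)-p(s-1)$ is $s$, while that coming from $G=g\cdot(f\circ z)$ is $g(s)f'(z(s))$, and these differ, so if $\E{X(X-1)}=\infty$ the residual term $\bigl(s-g(s)f'(z(s))\bigr)z''(s)$ need not vanish as $s\uparrow 1$ and the quadratic need not hold. This is why $\E{X}<\infty$ alone does not license the expansion, and why the $\alpha>\alpha_c$ direction (``no real root, contradiction'') is not yet a proof. (ii) Your proposed fixed-point scheme does not close: the recursion $m_{k+1}=\alpha+m_k-\Prob{X^{(k)}\ge 1}$ is not a scalar iteration in $m_k$, because $\Prob{X^{(k)}\ge 1}$ is not a function of $\E{X^{(k)}}$. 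In the three solved cases these difficulties are circumvented only because $f$ is algebraic, so $G$ admits a closed form whose branch structure and discriminant can be analysed directly (as in Sections~\ref{subsec:small} and~\ref{subsec:large}); no substitute argument is known in the general setting, which is precisely why the statement remains a conjecture rather than a theorem.
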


This conjecture holds in our setting, despite the fact that our \Geom($\frac{1}{2}$) offspring distribution has $\var{N} = 2$.

\section{Acknowledgements}
Q.C.'s research was generously supported by Undergraduate Research Bursary URB-1718-42 from the London Mathematical Society, and by funding from the Mathematical Institute, University of Oxford.  C.G.'s research was supported by EPSRC Fellowship EP/N004833/1.

\bibliographystyle{abbrv}

\end{document}